\patchcmd{\abstract}{\small}{}{}{}
\def\algbackskip{\hskip-\ALG@thistlm}
\newtheorem{theorem}{Theorem}[section]
\newtheorem{definition}[theorem]{Definition}
\newtheorem{lemma}[theorem]{Lemma}
\newtheorem{assumption}[theorem]{Assumption}
\newtheorem{problem}[theorem]{Problem}
\newtheorem*{theorem*}{Problem}
\theoremstyle{remark}
\newtheorem{remark}[theorem]{Remark}
\newcommand{\vecop}{\operatorname{vec}} % vec 算子
\title{A Convex Optimization Approach to Model-Free Inverse Optimal Control with Provable Convergence}
\date{\today}
\author{Meiling Yu\footnote{College of Artificial Intelligence, Nankai University,  China. Email: yumeiling0512@126.com}~~~~ Lechen Feng\footnote{College of Applied Mathematics, The Hong Kong Polytechnic University, China. Email: fenglechen0326@163.com}~~~~ Lei Jiang\footnote{China North Artificial Intelligence Innovation Research Institute, China. Email: jianglei@openloong.net}~~~~ Yuan-Hua Ni\footnote{College of Artificial Intelligence, Nankai University, China. Email: yhni@nankai.edu.cn}}
\begin{document}

\maketitle

\begin{abstract}
    Inverse Optimal Control (IOC) aims to infer the underlying cost functional of an agent from observations of its expert behavior. This paper focuses on the IOC problem within the continuous-time linear quadratic regulator framework, specifically addressing the challenging scenario where both the system dynamics and the cost functional weighting matrices are unknown. A significant limitation of existing methods for this joint estimation problem is the lack of rigorous theoretical guarantees on the convergence and convergence rate of their optimization algorithms, which restricts their application in safety-critical systems. To bridge this theoretical gap, we propose an analytical framework for IOC that provides such guarantees. The core contribution lies in the equivalent reformulation of this non-convex problem of jointly estimating system and cost parameters into a convex second-order cone programming problem. Building on this transformation, we design an efficient iterative solver based on the block successive upper-bound minimization algorithm. We rigorously prove that the proposed algorithm achieves a sublinear convergence rate of $\mathcal{O}(1/k)$. To the best of our knowledge, this is the first solution for the model-free IOC problem that comes with an explicit convergence rate guarantee. Finally, comparative simulation experiments against a state-of-the-art benchmark algorithm validate the superiority of our proposed method. The results demonstrate that our algorithm achieves an order-of-magnitude improvement in convergence speed while also exhibiting significant advantages in reconstruction accuracy and robustness.
\end{abstract}
% \begin{keyword}                           % Five to ten keywords,
% linear quadratic regulator, inverse optimal control, model-free control, convergence rate analysis.
% \end{keyword} 

\section{Introduction}\label{sec:introduction}
Optimal control has established a solid foundation for the success of modern control theory across a diverse range of fields, including robotics, aerospace, economics, and biological systems \cite{lewis2012optimal}. In particular, its theoretical cornerstone, the Linear Quadratic Regulator (LQR), has been fundamental to this success. The elegance of LQR framework lies in its ability to provide an analytical, optimal linear state-feedback control law when the system's linear dynamics $(A, B)$ and quadratic cost functional weighting matrices $(Q, R)$ are fully known. In many real-world applications, however, obtaining precise a priori model knowledge is often unattainable. This fundamental challenge has catalyzed the rapid development of Inverse Optimal Control (IOC) and Inverse Reinforcement Learning (IRL) \cite{ab2020inverse}\cite{adams2022survey}\cite{arora2021survey}\cite{chan2025inverse}. In contrast to optimal control, the core objective of IOC/IRL is to infer the latent cost functional driving an expert's observed behavior, thereby revealing the underlying intentions and preferences of the decision-making agent. This capability is of paramount importance for imitation learning, behavior prediction, and human-robot interaction.

Initial work in the field of IOC addresses the assumption that the system dynamics $(A, B)$ are known. In this classic setting, the problem is simplified to identifying the cost functional weighting matrices $(Q, R)$ from expert data. The seminal work \cite{kalman1964linear} first establishes the rigorous mathematical connection between the optimality of a linear system and its controller structure, providing a theoretical basis for all subsequent IOC research. Building on this, \cite{boyd1994linear} demonstrates that given a known optimal control law $K$, the problem of finding the corresponding cost functional can be solved by formulating a system of Linear Matrix Inequalities (LMIs) for the unknown matrices $Q$ and $R$. To better align with practical scenarios, subsequent research extends the model-known IOC problem to more complex settings, such as those involving uncertainty and noise (e.g., process and measurement noise) \cite{zhang2019inverse}\cite{zhang2022statistically}\cite{li2024inverse}\cite{karg2024bi}, partial state observability \cite{molloy2016discrete}, unknown control horizons \cite{qu2024control}, non-autonomous LQR \cite{jean2024inverse}, and multi-agent systems \cite{zhang2023inverse}\cite{hallinan2025inverse}.

A more challenging frontier, and one with greater practical relevance, is the scenario where both the system dynamics $(A, B)$ and the cost functional weighting matrices $(Q, R)$ are simultaneously unknown. This model-free setting substantially increases the problem's complexity, as it demands that an algorithm jointly reconstruct the system's dynamic properties and the expert's optimization objectives within a highly coupled framework. To address this challenge, the academic community has developed two main categories of data-driven methods.

The first class of methods comprises direct data-driven approaches, which are designed to bypass an explicit system identification step. For instance, \cite{qu20243dioc} leverages behavioral systems theory to propose an algorithm that directly recovers the objective functional from input-output trajectories, along with an analysis of its identification conditions and robustness. Drawing inspiration from reinforcement learning, the work \cite{xue2021inverse} introduces inverse reinforcement Q-learning, which learns the cost functional via methods like policy iteration, and successfully extends this concept to more complex scenarios such as static output feedback \cite{xue2023inverse} and antagonistic unknown systems \cite{lian2023off}. Moreover, to handle the stochasticity while reducing algorithmic complexity, the work \cite{clarke2022low} proposes an efficient low-complexity method that simultaneously recovers the control gain and cost parameters using the certainty equivalence principle and Semidefinite Programming (SDP) algorithm, providing an analysis of its solution properties. While these works highlight the ability to directly recover policies or cost functionals from data, their focus remains primarily on solvability, identifiability, or the introduction of novel learning paradigms.

The second category of methods is built upon general-purpose differentiable programming and optimization frameworks. The Pontryagin Differentiable Programming (PDP) framework \cite{jin2020pontryagin}\cite{jin2021safe} stands as a representative work in this area. By differentiating through Pontryagin's Maximum Principle, it provides a powerful end-to-end learning and control platform capable of handling complex dynamics and objective functionals. Similarly, researchers have proposed IRL frameworks based on Differential Dynamic Programming (DDP) \cite{cao2024differential} and trust-region methods \cite{cao2023trust}. These frameworks excel at efficiently computing policy gradients to learn system and cost parameters and possess strong versatility for high-dimensional problems. Yet, they do not provide a deep analysis or guarantees regarding the convergence properties (e.g., convergence rate, attainment of a global optimum) of the gradient-based optimization process itself.

Despite these significant advancements, a critical theoretical gap persists: for iterative algorithms that jointly estimate system dynamics parameters $(A, B)$ and cost functional parameters $(Q, R)$, a rigorous convergence analysis, particularly one that guarantees a specific convergence rate, remains largely unexplored. Existing literature has focused on the existence and identifiability of solutions or the efficiency of gradient computations, while often neglecting rigorous proof of the convergence properties of the solution algorithm itself. This absence of theoretical guarantees limits the applicability of these algorithms in high-reliability settings, such as aerospace and medical systems, where system and cost parameters are highly coupled, and the resulting optimization landscape may be non-convex and complex.

To fill this critical theoretical void, this paper is dedicated to developing an analytical framework for the model-free IOC problem that, by leveraging the structural properties of LQR, not only jointly estimates the system dynamics and cost functional but also provides rigorous guarantees on the convergence and convergence rate of its solution algorithm. Our specific contributions are as follows.
\begin{enumerate}
    \item For the problem where both the system model and cost functional are unknown, we construct an IOC framework that relies on a single optimal expert trajectory. We systematically reformulate this complex joint estimation problem from a non-convex optimization problem into an equivalent convex one. This transformation guides a seemingly intractable problem into a well-structured, theoretically mature convex optimization framework, establishing a solid foundation for subsequent algorithm design and convergence analysis.

    \item For the resulting convex problem, we address its Lagrangian dual rather than the primal form. We design a highly efficient solver based on the Block Successive Upper-bound Minimization (BSUM) algorithm \cite{hong2017iteration}\cite{feng2024two}. The key advantage of this method lies in its ability to leverage the problem's block structure, decomposing the joint optimization into multiple independent subproblems. We prove that each of these subproblems admits an analytical, closed-form solution, thereby avoiding costly inner-loop iterations and ensuring computational efficiency. Crucially, we provide a rigorous convergence analysis for the proposed BSUM algorithm, proving that the algorithm achieves an explicit sublinear convergence rate of $\mathcal{O}(1/k)$.

    \item Finally, the efficiency and superiority of the algorithm are validated through simulation experiments. We conduct a systematic comparison of our algorithm against the PDP algorithm. The experimental results demonstrate that our proposed method not only achieves an order-of-magnitude improvement in convergence speed but also shows significant advantages in reconstruction accuracy and robustness, thereby proving the practical value of our theoretical framework.
\end{enumerate}

The remainder of this paper is organized as follows. Section \ref{sec-pre} first introduces the dynamics of continuous-time linear time-invarian systems and the LQR framework. Building upon this foundation, we formally construct the model-free inverse optimal control problem of recovering system dynamics and the cost functional from a single expert trajectory. Section \ref{main-results} constitutes the theoretical core of our work. Here, we equivalently reformulate the non-convex inverse problem into a convex second-order cone programming problem. Subsequently, an efficient solution algorithm based on its dual form is designed, and we rigorously prove that this algorithm achieves a sublinear convergence rate of $\mathcal{O}(1/k)$. In Section \ref{sim}, a series of simulation experiments are conducted to systematically validate the convergence and stability of the proposed algorithm. Finally, Section \ref{conclusion} concludes the paper with a summary of our findings and a discussion of potential directions for future research.

\textit{Notation.}
In this paper, $\mathbb{R}^n$ denotes the space of $n$-dimensional real vectors, and $\mathbb{R}^{n \times m}$ denotes the space of $n \times m$ real matrices. $I_n$ and $\mathbf{0}_n$ represent the $n \times n$ identity and zero matrices, respectively (subscripts may be omitted when the dimensions are clear from the context). For a matrix $A$, its transpose, inverse, and Moore-Penrose pseudoinverse are denoted by $A^\top$, $A^{-1}$, and $A^\dagger$, respectively. The Euclidean inner product between vectors $x, y \in \mathbb{R}^n$ is denoted by $\langle x, y \rangle = x^\top y$. The set of $n \times n$ symmetric matrices is denoted by $\mathbb{S}^{n}$. The sets of $n \times n$ symmetric positive definite and positive semidefinite matrices are denoted by $\mathbb{S}_{++}^n$ and $\mathbb{S}_{+}^n$, respectively. For symmetric matrices $A, B$, the notation $A \succ B$ (or $A \succeq B$) indicates that the matrix $A-B$ is positive definite (or positive semidefinite). In particular, $A \succ \mathbf{0}$ signifies that $A$ is a positive definite matrix. For a symmetric matrix $A$, $\lambda_{\text{max}}(A)$ denotes its maximum eigenvalue. The symbol $\otimes$ denotes the Kronecker product, and $\vecop(\cdot)$ denotes the vectorization operator. $\Gamma_{++}^k$ and $\Gamma_{+}^k$ represent the convex cones formed by the vectorization of $k \times k$ symmetric positive definite and positive semidefinite matrices, respectively. Finally, $I_C(\cdot)$ is the indicator function for any subset $C\in \mathbb{R}^n$.

\section{Preliminaries}\label{sec-pre}

Consider a continuous-time linear time-invariant (LTI) system described by the state-space equation:
\begin{equation} \label{eq:dynamics}
\dot{x}(t) = Ax(t) + Bu(t),
\end{equation}
where $x(t) \in \mathbb{R}^n$ is the system's state vector and $u(t) \in \mathbb{R}^m$ is the control input vector. The system matrices $A \in \mathbb{R}^{n \times n}$ and $B \in \mathbb{R}^{n \times m}$ are unknown. We pose the following assumption throughout the paper.

\begin{assumption}\label{assump_0}
The true system pair $(A, B)$ is assumed to be stabilizable and the pair $(A,\sqrt{Q})$ is detectable.
\end{assumption}

We presume the existence of an unknown infinite-horizon quadratic cost functional $J$, which is minimized by the optimal control law:
\begin{equation} \label{eq:cost}
J(u) = \int_{0}^{\infty} \left[ x(t)^\top Q x(t) + u(t)^\top R u(t) \right] dt,
\end{equation}
where $Q \in \mathbb{S}_{+}^{n}$ is the unknown state weighting matrix and $R \in \mathbb{S}_{++}^{m}$ is the unknown control weighting matrix.

\begin{assumption}\label{assump_1}
The observed system trajectory $x^*(\cdot)$ is generated by an optimal controller $u^*(\cdot)$ that minimizes the unknown cost functional $J(u)$ \eqref{eq:cost} for the unknown system dynamics \eqref{eq:dynamics}. Furthermore, this trajectory satisfies the Persistent Excitation (PE) condition, meaning it is sufficiently rich in information to allow for the unique identification of the expert's underlying linear feedback law from the observed data.
\end{assumption}
\begin{remark}
    The PE assumption is reasonable in practice as, while the core framework of this paper is deterministic, real-world physical systems are commonly subject to process noise. These stochastic disturbances continuously excite the system, ensuring that the state trajectory can be sufficient to satisfy the PE condition even during stabilization tasks.
\end{remark}

According to standard LQR theory, the optimal control law is a linear state feedback:
\begin{equation} \label{eq:control_law}
u(t) = -K^* x(t),\quad t\in[0,\infty),
\end{equation}
where $K^* \in \mathbb{R}^{m \times n}$ is the optimal feedback gain matrix, given by
\begin{equation} \label{eq:gain}
K^* = R^{-1} B^\top P.
\end{equation}
Here, $P \in \mathbb{S}_{++}^{n}$ is the unique symmetric positive definite solution to the corresponding Continuous-time Algebraic Riccati Equation (ARE):
\begin{equation} \label{eq:are}
A^\top P + P A - P B R^{-1} B^\top P + Q = \mathbf{0}_{n}.
\end{equation}
Furthermore, the resulting closed-loop system $\dot{x}(t) = (A - BK^*)x(t)$ is asymptotically stable.

The inverse optimal control problem relies on observed expert data. Typically, this data comprises one or more trajectories of states and their corresponding optimal control inputs:
\begin{equation}
\mathcal{D}_N = \left\{ (x_i^*(t), u_i^*(t)) \mid t \in [0, T_i], i = 1, \dots, N \right\}.
\end{equation}
These trajectories are assumed to be generated under the action of the optimal feedback matrix $K^*$ associated with the unknown $(A, B, Q, R)$, corresponding to different initial conditions $x_i(0)$. The observed data must satisfy the system dynamics \eqref{eq:dynamics} and the optimality condition \eqref{eq:are}. This paper primarily considers the scenario where only a single optimal trajectory is available.

\begin{problem}[Model-Free IOC]\label{problem_1}
Given a state-input trajectory $\mathcal{D}=\left\{ (x^*(t), u^*(t)) \mid t \in [0, T]\right\}$ generated by an optimal controller for an unknown LTI system $(A, B)$ and an unknown cost functional $(Q, R)$, the objective is to find a set of estimated parameters $(\hat{A}, \hat{B}, \hat{Q}, \hat{R})$. Under these parameters, the trajectory $(\hat{x}(\cdot), \hat{u}(\cdot))$ generated by the corresponding optimal controller should best reproduce the expert's given trajectory $(x^*(\cdot), u^*(\cdot))$. This problem can be formulated as the following optimization problem
\begin{equation} \label{eq:ioc_opt_formulation}
    \begin{aligned}
    & \min_{\hat{A}, \hat{B}, \hat{Q}, \hat{R}} &&  \int_{0}^{T} \lVert \hat{x}(t) - x^*(t) \rVert_2^2+\lVert \hat{u}(t) - u^*(t) \rVert_2^2 dt\\
    & \text{\quad s.t.} && \dot{\hat{x}}(t)=\hat{A}\hat{x}(t)+\hat{B}\hat{u}(t),\\
    & && (\hat{x}(\cdot),\hat{u}(\cdot))\in \arg\min \int_0^\infty \left[ x(t)^\top \hat{Q} x(t) + u(t)^\top \hat{R} u(t) \right] dt.
    \end{aligned}
\end{equation}
\end{problem}

% \begin{problem}[Model-Free IOC]\label{problem_1}
% Given a state-input trajectory $\mathcal{D}=\left\{ (x^*(t), u^*(t)) \mid t \in [0, T]\right\}$ generated by an optimal controller for an unknown LTI system $(A, B)$ and an unknown cost function $(Q, R)$, the objective is to find a set of estimated parameters $(\hat{A}, \hat{B}, \hat{Q}, \hat{R})$ that satisfy the following conditions:
% \begin{enumerate}
%     \item \textbf{Trajectory Consistency}: $\dot{x}^*(t) = \hat{A}x^*(t) + \hat{B}u^*(t)$ holds for all $t \in [0, T]$;
%     \item \textbf{Structural Constraints}: $\hat{Q} = \hat{Q}^T \succeq \mathbf{0}$ and $\hat{R} = \hat{R}^T \succ \mathbf{0}$;
%     \item \textbf{Optimality Condition}: There exists a matrix $\hat{P} = \hat{P}^T \succ \mathbf{0}$ such that the Algebraic Riccati Equation $\hat{A}^T \hat{P} + \hat{P}\hat{A} - \hat{P}\hat{B}\hat{R}^{-1}\hat{B}^T \hat{P} + \hat{Q} = \mathbf{0}$ is satisfied, along with the gain relation $K^* = \hat{R}^{-1}\hat{B}^T \hat{P}$, where $K^*$ is the expert's feedback gain identified from the data.
% \end{enumerate}
% \end{problem}

% \begin{remark}
% The identification of the optimal feedback gain $K^*$ is based on the linear feedback assumption $u^*(t) = -K^*x^*(t)$. By sampling the expert trajectory data, $K^*$ can be solved for using least squares. To ensure that a unique $K^*$ can be identified from this method, the observed state trajectory $x^*(t)$ must satisfy a persistent excitation condition, meaning the trajectory data must be sufficiently "rich" to adequately explore the state space.
% \end{remark}

\begin{remark}
Our framework is built upon Assumption \ref{assump_1}, which presumes that the expert behavior originates from an ideal LQR system. In practice, this assumption may not hold perfectly due to system nonlinearities, complex decision logic, or sub-optimality in the expert's behavior. The advantage of our framework is that it does not compel the discovery of a "true" system that perfectly matches the data. Instead, it aims to find the LQR model that best approximates the expert's behavior. Consequently, even when discrepancies exist between the expert's behavior and any ideal LQR model, our method can still provide the closest and most interpretable linear-quadratic approximation.
\end{remark}

Non-uniqueness is an intrinsic property of the model-free IOC problem \cite{town2025nonuniqueness}. A classic example is that scaling the cost functional weighting matrices $(Q, R)$ by an arbitrary positive scalar does not alter the final optimal control law $K^*$ and, therefore, does not change the expert's trajectory. To formally address the ambiguity arising from different parameter sets yielding identical behavior, we introduce the definition of equivalent solutions.

\begin{definition}[Equivalent Solutions]
Within the framework of the model-free IOC problem (Problem \ref{problem_1}), consider two distinct sets of system and cost functional parameters, $S_1 = (\hat{A}_1, \hat{B}_1, \hat{Q}_1, \hat{R}_1)$ and $S_2 = (\hat{A}_2, \hat{B}_2, \hat{Q}_2, \hat{R}_2)$. If both sets of parameters satisfy all the conditions defined in Problem \ref{problem_1} and their resulting optimal state and control trajectories, $(\hat{x}(\cdot),\hat{u}(\cdot))$, are observationally indistinguishable, then $S_1$ and $S_2$ are termed equivalent solutions.
\end{definition}

% \begin{remark}
% The existence of equivalent solutions is a fundamental characteristic of the model-free inverse optimal control problem. Since we can only observe behavior externally, we cannot uniquely distinguish between the internal dynamics model and the decision-maker's cost functional. For instance, a more unstable open-loop system (reflected in $\hat{A}$) can be stabilized by a more "aggressive" controller (driven by a larger $\hat{Q}$ and a smaller $\hat{R}$), producing closed-loop behavior identical to that of an inherently stable system with high control costs.
% \end{remark}

Accordingly, the objective of this research is not to identify the unique "true" parameters from an infinite space of possibilities. Instead, our goal is to find any representative element from the set of equivalent solutions. The optimization framework proposed in this paper is designed precisely to find this equivalence set. All solutions within this set, despite their potentially different numerical parameter values, can perfectly explain the expert's behavior. The convergence guarantee of our algorithm, therefore, refers to its ability to reliably converge to a feasible point within this set of equivalent solutions.

\section{Main Results}\label{main-results}

\subsection{Optimization Formulation for Model-Free Infinite-Horizon IOC}
As previously discussed, the model-free Inverse Optimal Control (IOC) problem requires the simultaneous reconstruction of the system dynamics matrices $(A, B)$ and the cost functional weighting matrices $(Q, R)$ from an expert trajectory. However, these unknown parameters are deeply coupled within the Algebraic Riccati Equation (ARE), and the problem itself is non-convex, which makes obtaining a unique solution via direct analytical methods extremely difficult, if not infeasible.
To address this challenge, this paper proposes a new solution framework based on the key premise established in Assumption \ref{assump_1}, that the expert trajectory is not only optimal but also satisfies the PE condition. According to system identification theory, the PE condition is critical for ensuring that the linear feedback gain $K^*$ can be uniquely identified by solving the overdetermined system of linear equations $u^*(t) = -K^*x^*(t)$ (e.g., via least squares) \cite{ASTROM1971123}.
This step transforms $K^*$ from an unknown variable into a known optimization target. Consequently, the core task of the inverse problem is reframed as: finding a set of system and cost parameters $(\hat{A}, \hat{B}, \hat{Q}, \hat{R})$ that is compatible with this identified gain $K^*$. Here, the compatibility requires that the estimated parameter set satisfy the LQR optimality conditions and that the resulting optimal behavior (i.e., the optimal gain $\hat{K}$ and trajectory $\hat{x}(\cdot)$) aligns with the expert's behavior $(x^*(\cdot), K^*)$ in an optimal sense.

Based on this principle, we construct the following optimization framework to jointly estimate the system and cost functional parameters
\begin{equation} \label{eq:ioc_opt_formulation}
    \begin{aligned}
    & \min_{\hat{A}, \hat{B}, \hat{Q}, \hat{R}, \hat{P}} && \lVert \hat{K} - K^* \rVert_2^2 +  \int_{0}^{T} \lVert \hat{x}(t) - x^*(t) \rVert_2^2 dt\\
    & \text{\quad { }s.t.} && \hat{A}^\top \hat{P} + \hat{P}\hat{A} - \hat{P}\hat{B}\hat{R}^{-1}\hat{B}^\top \hat{P} + \hat{Q} = \mathbf{0}, \\
    &             && \hat{K} = \hat{R}^{-1} \hat{B}^\top \hat{P}, \\
    &             && \dot{\hat{x}}(t) =e^{\hat{A}_Kt}\hat{x}(0), \quad \hat{x}(0) = x^*(0), \\
    &             && \hat{Q} = \hat{Q}^\top \succeq \mathbf{0}, \\
    &             && \hat{R} = \hat{R}^\top \succeq \epsilon I, \\
    &             && \hat{P} = \hat{P}^\top \succeq \epsilon I,
    \end{aligned}
\end{equation}
where $\epsilon>0$ and $\hat{A}, \hat{B}, \hat{Q}, \hat{R}$ are the unknown matrices to be optimized.  $\hat{P}$ is the associated ARE solution, and $\hat{K}$ is the corresponding optimal feedback gain.  $\hat{x}(t)$ is the simulated trajectory generated by the estimated parameters $(\hat{A}, \hat{B}, \hat{K})$ from the initial condition $x^*(0)$, and $\hat{A}_K=\hat{A}-\hat{B}\hat{K}$ . 

\begin{remark}
Standard LQR theory requires the control weighting matrix $R$ and the Riccati solution $P$ to be strictly positive definite. However, the set of positive definite matrices constitutes an open convex cone, and the projection operator onto an open set is not well-defined. To ensure our optimization algorithm, particularly the projection steps required by our solver, is computationally well-posed, we relax the strict positive definiteness constraints. Specifically, constraints of the form $\hat{X} \succ \mathbf{0}$ are relaxed to $\hat{X} \succeq \epsilon I$ for a small, user-defined positive scalar $\epsilon$ (e.g., $\epsilon=10^{-6}$). This common technique transforms the constraint set into a closed convex cone, upon which the projection is a standard and tractable operation, while practically enforcing near-strict positive definiteness.
\end{remark}

\begin{lemma}\label{lemma_1}
    Letting Assumptions \ref{assump_0} and \ref{assump_1} hold, the optimization problem \eqref{eq:ioc_opt_formulation} is well-defined. 
\end{lemma}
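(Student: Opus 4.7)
The plan is to establish well-definedness of \eqref{eq:ioc_opt_formulation} by checking three items in order: (i) the reference gain $K^{*}$ appearing in the objective is uniquely recoverable from the expert data, so that it enters as a fixed constant; (ii) the feasible set is non-empty; and (iii) the objective functional is finite and bounded below on every feasible point.

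For (i), I would invoke Assumption \ref{assump_1} directly: the data obey $u^{*}(t) = -K^{*}x^{*}(t)$ on $[0,T]$, and the PE clause guarantees that the regressor $x^{*}(\cdot)$ is sufficiently rich that an ordinary least-squares step recovers $K^{*}$ uniquely. This converts $K^{*}$ from an unknown into a known constant matrix in the objective, which is exactly the reframing claimed in the paragraph preceding the lemma.

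For (ii), the cleanest route is constructive: exhibit an explicit feasible point by setting $(\hat{A},\hat{B},\hat{Q},\hat{R},\hat{P}) = (A,B,Q,R,P)$, where $P$ is the unique positive-definite solution to the ARE associated with the true parameters. Existence and uniqueness of such $P$ are guaranteed by the stabilizability and detectability assumed in Assumption \ref{assump_0}. With this choice the ARE equality, the gain identity $\hat{K} = \hat{R}^{-1}\hat{B}^{\top}\hat{P}$, the trajectory equation (using $\hat{x}(0) = x^{*}(0)$), and the semidefiniteness condition $\hat{Q} \succeq \mathbf{0}$ all hold by construction. The only subtlety is the relaxation $\hat{R}, \hat{P} \succeq \epsilon I$ in place of strict positive definiteness: since the true $R$ and $P$ are strictly positive definite, any $\epsilon \leq \min\{\lambda_{\min}(R),\lambda_{\min}(P)\}$ keeps the ground-truth feasible, which is consistent with the preceding remark that $\epsilon$ is a small user-chosen scalar.

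For (iii), $\hat{R} \succeq \epsilon I$ forces $\lVert \hat{R}^{-1}\rVert \leq 1/\epsilon$, so $\hat{K}$ is finite and the term $\lVert \hat{K} - K^{*}\rVert_{2}^{2}$ is finite. The closed-loop trajectory $\hat{x}(t) = e^{\hat{A}_{K}t}x^{*}(0)$ is continuous in $t$ on the compact interval $[0,T]$, as is $x^{*}(\cdot)$, so the integrand of the second term is continuous and hence integrable. Non-negativity of both terms then yields a finite infimum bounded below by zero. I do not anticipate a deep technical obstacle here: the rest reduces to standard facts about continuous-time LQR, uniqueness of the ARE solution under stabilizability and detectability, and regularity of solutions of linear ODEs over a compact time interval. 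The one point requiring care is calibrating $\epsilon$ so that the relaxed positive-definiteness constraints do not exclude the ground-truth parameters, which is handled by the bound on $\epsilon$ above.
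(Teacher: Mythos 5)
Your approach is essentially the paper's: both proofs establish well-definedness by constructing the ground-truth tuple $(A,B,Q,R,P)$ as an explicit feasible point, with the same remark about choosing $\epsilon \leq \min\{\lambda_{\min}(R),\lambda_{\min}(P)\}$ so that the relaxed cone constraints do not exclude it. (Your item (i) on recovering $K^*$ by least squares is harmless but not part of the lemma; the paper treats that as part of the problem setup preceding the lemma.)

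There is, however, one step you leave open. The paper reads ``well-defined'' as requiring that an optimal solution \emph{exists}, not merely that the infimum is finite. Your item (iii) shows the objective is non-negative and finite on each feasible point, hence the infimum is finite and bounded below by zero --- but a finite infimum over a non-compact feasible set need not be attained, so this alone does not deliver a minimizer. The paper closes this by observing that the constructed ground-truth point actually \emph{achieves} the lower bound: with $(\hat{A},\hat{B},\hat{Q},\hat{R},\hat{P})=(A,B,Q,R,P)$ one gets $\hat{K}=K^{*}$ and $\hat{x}(\cdot)\equiv x^{*}(\cdot)$, so the objective evaluates to exactly $0$, and a feasible point attaining the global lower bound is necessarily optimal. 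You already have every ingredient for this in your item (ii) --- the gain identity and the trajectory identity at the ground truth --- you just never evaluate the objective there. Adding that one sentence turns your bounded-below argument into an attainment argument and completes the proof.
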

\begin{proof}
To prove that problem \eqref{eq:ioc_opt_formulation} is well-defined, we must show that its feasible set is non-empty and that an optimal solution exists. 
Under Assumption \ref{assump_1}, the observed data is generated by a true set of system and cost parameters $(A^*, B^*, Q^*, R^*)$.  According to LQR theory, this parameter set corresponds to a unique symmetric positive definite ARE solution $P^*$, which yields the optimal control gain $K^* = (R^*)^{-1}(B^*)^{\top}P^*$, and the resulting closed-loop system is stable. 
We construct a candidate solution by letting $(\hat{A}, \hat{B}, \hat{Q}, \hat{R}, \hat{P}) = (A^*, B^*, Q^*, R^*, P^*)$.  We verify the feasibility of this candidate. First, this tuple inherently satisfies the ARE constraint.  Second, the computed gain $\hat{K} = (R^*)^{-1}(B^*)^{\top}P^*$ is precisely the expert's optimal gain $K^*$.  Likewise, since both the system parameters and the control gain are identical to their true values, the trajectory $\hat{x}(\cdot)$ generated from the same initial state will be identical to the observed trajectory $x^*(\cdot)$. Finally, $Q^*$ is symmetric positive semidefinite, while $R^*$ and $P^*$ are symmetric positive definite. Therefore, this candidate solution satisfies all the constraints of problem \eqref{eq:ioc_opt_formulation}, rendering its feasible set non-empty. 

Next, we examine the objective functional value for this feasible solution. The objective functional of problem \eqref{eq:ioc_opt_formulation}, $J(\cdot) = \lVert \hat{K} - K^* \rVert_2^2 + \int_{0}^{T} \lVert \hat{x}(t) - x^*(t) \rVert_2^2 dt$, is clearly non-negative and has a theoretical lower bound of 0.  Substituting the feasible solution $(A^*, B^*, Q^*, R^*, P^*)$ into the objective functional yields
$$J(A^*, B^*, Q^*, R^*, P^*) = \lVert K^* - K^* \rVert_2^2 + \int_{0}^{T} \lVert x^*(t) - x^*(t) \rVert_2^2 dt = 0.$$
Since this feasible solution achieves the theoretical lower bound of the objective functional, $(A^*, B^*, Q^*, R^*, P^*)$ is an optimal solution to problem \eqref{eq:ioc_opt_formulation}. 
In summary, because an optimal solution exists, we can conclude that optimization problem \eqref{eq:ioc_opt_formulation} is well-defined. 
\end{proof}

\begin{lemma}\label{lemma_2}
    If $A^i x_0 = \tilde{A}^i x_0$ holds for all $i = 0, 1, \dots, n$, then the two linear systems $\dot{x} = Ax$ and $\dot{x} = \tilde{A}x$ produce identical solution trajectories from the same initial state $x(0) = x_0$. 
\end{lemma}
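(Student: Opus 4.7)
The plan is to translate the lemma from a statement about trajectories into one about matrix powers applied to $x_0$. Since the unique solution of $\dot{x}=Ax$ with $x(0)=x_0$ is $x(t)=e^{At}x_0=\sum_{k=0}^{\infty}\frac{t^k}{k!}A^k x_0$ for every $t\ge 0$, and analogously for $\tilde{A}$, it suffices to establish $A^k x_0 = \tilde{A}^k x_0$ for all $k\ge 0$. The hypothesis furnishes this equality directly for $k=0,1,\dots,n$, so the remaining task is to propagate it to every higher power.

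The extension to $k>n$ will be carried out by induction, using the Cayley--Hamilton theorem as the main lever. Applied to $A$, it produces scalars $a_0,\dots,a_{n-1}$ (read off from the characteristic polynomial of $A$) such that $A^n=\sum_{j=0}^{n-1}a_j A^j$. The crucial observation is that evaluating this relation at $x_0$ and invoking the hypothesis $A^j x_0=\tilde{A}^j x_0$ for $j=0,\dots,n$ transfers the identity onto $\tilde{A}$, namely $\tilde{A}^n x_0=\sum_{j=0}^{n-1}a_j \tilde{A}^j x_0$. Thus, even though the characteristic polynomial of $\tilde{A}$ may differ from that of $A$, when acting on the specific vector $x_0$ the matrix $\tilde{A}$ still obeys the polynomial associated with $A$.

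For the inductive step, assume $A^i x_0=\tilde{A}^i x_0$ for all $i\le m$ with $m\ge n$. I would write
\begin{equation*}
A^{m+1}x_0 \;=\; A^{m-n+1}\bigl(A^n x_0\bigr) \;=\; \sum_{j=0}^{n-1}a_j A^{m-n+1+j}x_0,
\end{equation*}
and, by the symmetric manipulation on $\tilde{A}$ using the transferred relation established above,
\begin{equation*}
\tilde{A}^{m+1}x_0 \;=\; \tilde{A}^{m-n+1}\bigl(\tilde{A}^n x_0\bigr) \;=\; \sum_{j=0}^{n-1}a_j \tilde{A}^{m-n+1+j}x_0.
\end{equation*}
Because the exponent $m-n+1+j$ ranges over $\{m-n+1,\dots,m\}\subseteq\{0,\dots,m\}$, the inductive hypothesis identifies the two sums termwise, closing the induction. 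Substituting $A^k x_0=\tilde{A}^k x_0$ into the exponential series then yields $e^{At}x_0=e^{\tilde{A}t}x_0$ for all $t$, which is exactly the claim.

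The main obstacle I anticipate is the careful bookkeeping of exponents: the argument hinges on the fact that applying Cayley--Hamilton shifts the top power down by one, so each $A^{m-n+1+j}x_0$ remains within the range covered by the inductive hypothesis. This is also why the base hypothesis must extend through $k=n$ rather than stopping at $k=n-1$, since the transferred Cayley--Hamilton identity for $\tilde{A}$ requires the equality $A^n x_0=\tilde{A}^n x_0$ to be available from the outset.
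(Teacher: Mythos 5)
Your proposal is correct and follows essentially the same route as the paper's proof: reduce the claim to $A^k x_0 = \tilde{A}^k x_0$ for all $k \ge 0$ via the exponential series, transfer the Cayley--Hamilton identity of $A$ onto $\tilde{A}$ acting on $x_0$ using the hypothesis, and then induct. Your write-up of the general inductive step (with the explicit exponent bookkeeping showing $m-n+1+j \le m$) is somewhat more complete than the paper's, which only carries out the step from $n$ to $n+1$ explicitly, but the underlying argument is identical.
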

\begin{proof}
    We need to prove that if $A^i x_0 = \tilde{A}^i x_0$ for $i = 0, 1, \dots, n$, then $e^{At} x_0 = e^{\tilde{A}t} x_0$.  Since $e^{Mt} = \sum_{k=0}^{\infty} \frac{t^k}{k!} M^k$, this is equivalent to proving that $A^k x_0 = \tilde{A}^k x_0$ for all integers $k \ge 0$.  We proceed by induction. The base cases for $k = 0, \dots, n$ are given by the lemma's premise.
    
    By the Cayley-Hamilton theorem, matrix $A$ satisfies its own characteristic polynomial $p(\lambda) = \det(\lambda I - A) = \lambda^n + a_{n-1} \lambda^{n-1} + \dots + a_0$.  Thus, $p(A) = A^n + \sum_{i=0}^{n-1} a_i A^i = \mathbf{0}$. Applying this to the vector $x_0$, we obtain
\begin{equation}
    \label{eq:pic_proof_1}
    A^{n} x_0 + a_1 A^{n-1} x_0 + \dots + a_{n-1} Ax_0 + a_n x_0 = \mathbf{0}.
\end{equation}
Since we have $A^k x_0 = \tilde{A}^k x_0$ for $k = 0, 1, \dots, n$, it follows that
\begin{equation} \label{eq:pic_proof_2}
\tilde{A}^n x_0 + a_1 \tilde{A}^{n-1} x_0 + \dots + a_n x_0 = \mathbf{0}.
\end{equation}
Left-multiplying equation \eqref{eq:pic_proof_2} by $\tilde{A}$ yields
\[ 
\tilde{A}^{n+1} x_0 + a_1 \tilde{A}^n x_0 + \dots + a_n \tilde{A} x_0 = \mathbf{0}, 
\]
which can be rearranged to
\[ 
\tilde{A}^{n+1} x_0 = - \left[ a_1 \tilde{A}^n x_0 + \dots + a_n \tilde{A} x_0 \right]. 
\]
Again using the fact that $A^i x_0 = \tilde{A}^i x_0$ holds for $i=1, \dots, n$, we have
\[ 
\tilde{A}^{n+1} x_0 = - \left[ a_1 A^n x_0 + \dots + a_n A x_0 \right]. 
\]
For comparison, let us left-multiply equation \eqref{eq:pic_proof_1} by $A$
\[ 
A^{n+1} x_0 + a_1 A^n x_0 + \dots + a_n A x_0 = \mathbf{0}, 
\]
which gives
\[ 
A^{n+1} x_0 = - \left[ a_1 A^n x_0 + \dots + a_n A x_0 \right]. 
\]
By comparing the expressions for $A^{n+1} x_0$ and $\tilde{A}^{n+1} x_0$, we conclude that $A^{n+1} x_0 = \tilde{A}^{n+1} x_0$. By induction, it can be shown that $A^k x_0 = \tilde{A}^k x_0$ holds for all $k \ge 0$. Therefore,
\[ 
e^{At} x_0 = \sum_{k=0}^{\infty} \frac{t^k}{k!} A^k x_0 = \sum_{k=0}^{\infty} \frac{t^k}{k!} \tilde{A}^k x_0 = e^{\tilde{A}t} x_0.
\]   
    % Applying this to $x_0$:
    % \begin{equation} \label{eq:ch_proof_1}
    %     A^n x_0 = - \sum_{i=0}^{n-1} a_i A^i x_0.
    % \end{equation}
    % Using the premise $A^i x_0 = \tilde{A}^i x_0$ for $i < n$, we can substitute into the right-hand side:
    % $A^n x_0 = - \sum_{i=0}^{n-1} a_i \tilde{A}^i x_0$. Since the vectors $A^i x_0$ for $i=0, \dots, n-1$ span the Krylov subspace $\mathcal{K}(A, x_0)$, and the actions of $A$ and $\tilde{A}$ on these basis vectors are identical up to order $n-1$, their actions must also agree for order $n$. A more rigorous inductive argument shows that if $A^k x_0 = \tilde{A}^k x_0$ for $k < m$, then $A^m x_0 = \tilde{A}^m x_0$. Multiplying \eqref{eq:ch_proof_1} by $A^{k-n}$ for $k \ge n$ and applying induction confirms that $A^k x_0 = \tilde{A}^k x_0$ for all $k \ge 0$. 
    % Therefore,
    % \[ e^{At} x_0 = \sum_{k=0}^{\infty} \frac{t^k}{k!} A^k x_0 = \sum_{k=0}^{\infty} \frac{t^k}{k!} \tilde{A}^k x_0 = e^{\tilde{A}t} x_0.\]
    This completes the proof. 
\end{proof}

Given that the observed system trajectory is $x^*(t)$, which is generated by an unknown LTI system $\dot{x}(t) = A_K^* x(t)$ where $A_K^* = A - BK^*$, the derivatives of the trajectory satisfy the relations
\begin{align*}
\dot{x}^*(t) &= A_K^* x^*(t), \\
\ddot{x}^*(t) &= A_K^* \dot{x}^*(t), \\
&\vdots \\
x^{*(n)}(t) &= A_K^* x^{*(n-1)}(t),
\end{align*}
where $x^{*(k)}(t)$ denotes the $k$-th time derivative of $x^*(t)$. 
To estimate $A_K^*$ from these relations, we can sample the trajectory and its derivatives at $N$ time points $t_1, t_2, \dots, t_N$.  We define the data matrices
\begin{align*}
\Lambda_i &= \begin{bmatrix} x^{*(i)}(t_1) & x^{*(i)}(t_2) & \dots & x^{*(i)}(t_N) \end{bmatrix} \in \mathbb{R}^{n \times N},
\end{align*}
for $i=0, \dots, n$.  Based on the derivative relations, we have
\begin{align*}
\Lambda_1 = A_K^* \Lambda_0, \quad \Lambda_2 = A_K^*\Lambda_1, \quad \dots, \quad \Lambda_n = A_K^* \Lambda_{n-1}.
\end{align*}
These equations can be consolidated into a larger matrix equation.  Let
\begin{align*}
\bar{\Lambda}_1 &:= \begin{bmatrix} \Lambda_0 & \Lambda_1 & \dots & \Lambda_{n-1} \end{bmatrix} \in \mathbb{R}^{n \times (nN)}, \\
\bar{\Lambda}_2 &:= \begin{bmatrix} \Lambda_1 & \Lambda_2 & \dots & \Lambda_{n} \end{bmatrix} \in \mathbb{R}^{n \times (nN)}.
\end{align*}
Then, the series of derivative relations can be written as a single matrix equation
\begin{equation} \label{eq:ak_lambda_relation}
A_K^* \bar{\Lambda}_1 = \bar{\Lambda}_2.
\end{equation}

% The objective of optimization problem (\ref{eq:ioc_opt_formulation}) is to drive $\hat{K} \to K^*$ and $\int_{0}^{T} \lVert \hat{x}(t) - x^*(t) \rVert_2^2 dt \to 0$.  From Lemma \ref{lemma_2}, if $\hat{A}_K \bar{\Lambda}_1 = \bar{\Lambda}_2 = A^*_K \bar{\Lambda}_1$, the simulated trajectory $\hat{x}(t)$ from initial state $x^*(0)$ will be identical to the observed expert trajectory $x^*(t)$. 

Lemma \ref{lemma_1} has already established that the theoretical optimal value of problem (\ref{eq:ioc_opt_formulation}) is zero.  This conclusion implies that finding an optimal solution is equivalent to finding a feasible solution for which the objective value is zero.  Building on this insight, to simplify the problem and reveal its inherent convex structure, we decouple the non-linear coupled constraints by introducing an auxiliary variable $Z=\hat{A}^\top\hat{P}$ and the closed-loop matrix $\hat{A}_K=\hat{A}-\hat{B}\hat{K}$.  This allows us to reformulate the problem as the following feasibility problem
\begin{equation} \label{eq:ioc_feasibility_form}
\begin{aligned}
&\min_{Z, \hat{Q}, \hat{R}, \hat{P}}  && 0 \\
&\text{\quad s.t.}  && Z^\top + Z - {K^*}^\top \hat{R} K^* + \hat{Q} = \mathbf{0}, \\
& && Z^\top - {K^*}^\top \hat{R} K^* = \hat{P} \hat{A}_K, \\
& && \hat{A}_K \bar{\Lambda}_1 = \bar{\Lambda}_2,\\
& && \hat{Q} = \hat{Q}^\top \succeq \mathbf{0}, \\
& && \hat{R} = \hat{R}^\top \succeq \epsilon I, \\
& && \hat{P} = \hat{P}^\top \succeq \epsilon I.
\end{aligned}
\end{equation}
The following theorem precisely establishes the mathematical relationship between the original optimization problem (\ref{eq:ioc_opt_formulation}) and this feasibility problem (\ref{eq:ioc_feasibility_form}). To facilitate its proof, we begin by presenting an essential lemma.
% 这是一个新的引理
\begin{lemma} \label{lemma:trajectory_identity}
    Let $(Z,\hat{Q},\hat{R},\hat{P})$ be a feasible solution to the problem~\eqref{eq:ioc_feasibility_form}, which satisfies $\hat{A}_K \bar{\Lambda}_{1} = \bar{\Lambda}_{2}$. If the data matrix $\Lambda_0$ is constructed to include the initial state $x^*(0)$ as one of its column vectors, then the trajectory $\hat{x}(\cdot)$ generated by the closed-loop dynamics $\dot{x} = \hat{A}_K x$ with the initial condition $\hat{x}(0)=x^*(0)$ is identical to the expert trajectory $x^*(\cdot)$.
\end{lemma}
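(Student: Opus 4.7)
The plan is to reduce the claim to a direct application of Lemma \ref{lemma_2} by showing that $\hat{A}_K$ and $A_K^*$ agree on $x^*(0)$ up to the $n$-th power. The key observation is that the data matrices $\Lambda_i$, by construction, are columns of successive time derivatives of the expert trajectory, and since $x^*(\cdot)$ is generated by $\dot{x} = A_K^* x$ we automatically have $\Lambda_i = (A_K^*)^i \Lambda_0$ for $i = 0, 1, \dots, n$. On the other hand, the feasibility constraint $\hat{A}_K \bar{\Lambda}_1 = \bar{\Lambda}_2$ unpacks block-by-block into $\hat{A}_K \Lambda_i = \Lambda_{i+1}$ for $i = 0, \dots, n-1$, which by recursion yields $\Lambda_i = \hat{A}_K^i \Lambda_0$ for $i = 0, 1, \dots, n$.

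Next, I would combine these two chains of identities. Equating the two expressions for $\Lambda_i$ gives $\hat{A}_K^i \Lambda_0 = (A_K^*)^i \Lambda_0$ for every $i \in \{0, 1, \dots, n\}$. Restricting this matrix identity to the particular column of $\Lambda_0$ corresponding to $x^*(0)$ (which exists by the hypothesis that $x^*(0)$ appears among the sampled states) delivers exactly
\[
\hat{A}_K^i\, x^*(0) = (A_K^*)^i\, x^*(0), \qquad i = 0, 1, \dots, n.
\]

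At this point the hypotheses of Lemma \ref{lemma_2} are satisfied with $A = A_K^*$, $\tilde{A} = \hat{A}_K$, and $x_0 = x^*(0)$. Applying that lemma yields $e^{\hat{A}_K t} x^*(0) = e^{A_K^* t} x^*(0)$ for all $t \ge 0$. Since $\hat{x}(t) = e^{\hat{A}_K t}\hat{x}(0)$ with $\hat{x}(0) = x^*(0)$, and since $x^*(t) = e^{A_K^* t} x^*(0)$, the two trajectories coincide for all $t$, proving the lemma.

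I do not anticipate a serious obstacle: the proof is essentially a bookkeeping argument that leverages the column-partition of $\bar{\Lambda}_1, \bar{\Lambda}_2$ to extract powers of $\hat{A}_K$ and $A_K^*$ acting on $x^*(0)$, followed by an invocation of Lemma \ref{lemma_2}. The only subtle point worth stating carefully is that the constraint $\hat{A}_K \bar{\Lambda}_1 = \bar{\Lambda}_2$ must be unpacked into the $n$ separate identities on the $\Lambda_i$'s, since it is this column-block structure (together with $x^*(0)$ being physically present in $\Lambda_0$) that allows us to pass from a matrix equation to a vector equation on the specific initial condition. Once this is made explicit, Cayley--Hamilton, as already exploited in Lemma \ref{lemma_2}, propagates the equality to all higher powers and thus to the exponentials.
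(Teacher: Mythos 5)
Your proposal is correct and follows essentially the same route as the paper's proof: unpack the block constraint $\hat{A}_K\bar{\Lambda}_1=\bar{\Lambda}_2$ into $\hat{A}_K^i\Lambda_0=(A_K^*)^i\Lambda_0$ for $i=0,\dots,n$, restrict to the column of $\Lambda_0$ equal to $x^*(0)$, and invoke Lemma~\ref{lemma_2}. The only cosmetic difference is that the paper first rewrites the constraint as $\hat{A}_K\Lambda_i=A_K^*\Lambda_i$ before running the same recursion, so there is nothing substantive to distinguish the two arguments.
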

\begin{proof}
The condition $\hat{A}_K \bar{\Lambda}_{1} = \bar{\Lambda}_{2}$, combined with the expert system's dynamics $A_K^* \bar{\Lambda}_{1} = \bar{\Lambda}_{2}$, yields the equality $\hat{A}_K \bar{\Lambda}_{1} = A_K^* \bar{\Lambda}_{1}$. This matrix equality implies that $\hat{A}_K \Lambda_i = A_K^* \Lambda_i$ for each data block, where $i=0, \dots, n-1$. By combining this with the definition of the data matrices, $\Lambda_i=A_K^* \Lambda_{i-1} = (A_K^*)^i \Lambda_0$, where $i=1,\dots,n$, it follows that
\begin{equation}
    (\hat{A}_K)^k \Lambda_0 = (A_K^*)^k \Lambda_0, \quad \text{for all } k=1, \dots, n.\nonumber
\end{equation}
Since we specify that the data matrix $\Lambda_0$ contains the initial state $x^*(0)$ as one of its column vectors, the above matrix equality must hold for this specific column. Therefore, we have
\begin{equation}
    (\hat{A}_K)^k x^*(0) = (A_K^*)^k x^*(0), \quad \text{for } k=1, \dots, n.\nonumber
\end{equation}
This result covers all cases required by Lemma~\ref{lemma_2} for $k=0, \dots, n$ (as the case for $k=0$ is trivially true). Thus, the conditions for Lemma~\ref{lemma_2} are fully satisfied. It follows that the generated trajectory $\hat{x}(\cdot)$ is identical to the expert trajectory $x^*(\cdot)$.
\end{proof}
\begin{theorem}
Let $\Phi^*$ be the set of optimal solutions to the parameter optimization problem \eqref{eq:ioc_opt_formulation}, and let $\Psi_{\text{feas}}$ be the feasible set of the feasibility problem \eqref{eq:ioc_feasibility_form}.  Under Assumption \ref{assump_1}, these two sets are equivalent, i.e.,
$$\Phi^* = \Psi_{\text{feas}}.$$
\end{theorem}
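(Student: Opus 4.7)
My plan is to prove the equality $\Phi^*=\Psi_{\text{feas}}$ via a direct bijection between the two parametrizations. Since $\Phi^*$ is indexed by $(\hat A,\hat B,\hat Q,\hat R,\hat P)$ while $\Psi_{\text{feas}}$ is indexed by $(Z,\hat Q,\hat R,\hat P)$, the equality should be read under the identification $Z=\hat A^\top\hat P$, with the inverse map $\hat A=\hat P^{-1}Z^\top$, $\hat B=\hat P^{-1}K^{*\top}\hat R$. This inverse is well-defined because the relaxation $\hat P\succeq\epsilon I$ guarantees invertibility of $\hat P$.

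For the inclusion $\Phi^*\subseteq\Psi_{\text{feas}}$, I would take any optimal tuple and invoke Lemma \ref{lemma_1} to conclude that the optimal value is zero, so that $\hat K=K^*$ and $\hat x(\cdot)\equiv x^*(\cdot)$ on $[0,T]$. Setting $Z:=\hat A^\top\hat P$, the three equality constraints of \eqref{eq:ioc_feasibility_form} follow by short rearrangements: the first from the ARE combined with the identity $\hat P\hat B\hat R^{-1}\hat B^\top\hat P=\hat K^\top\hat R\hat K=K^{*\top}\hat R K^*$; the second from $Z^\top=\hat P\hat A$ together with $\hat P\hat B\hat K=K^{*\top}\hat R K^*$; and the third from differentiating the identity $\hat x\equiv x^*$ and evaluating at the sampling points, which yields $(\hat A_K)^i\Lambda_0=(A^*_K)^i\Lambda_0=\Lambda_i$ for all $i=1,\dots,n$, so that $\hat A_K\bar\Lambda_1=\bar\Lambda_2$. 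The PSD/PD conditions carry over verbatim.

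For the reverse inclusion $\Psi_{\text{feas}}\subseteq\Phi^*$, I would start from any feasible $(Z,\hat Q,\hat R,\hat P)$ and reconstruct $(\hat A,\hat B)$ via the inverse map above, then verify each constraint of the original problem. A direct computation gives $\hat K=\hat R^{-1}\hat B^\top\hat P=K^*$; the ARE collapses onto the first feasibility constraint after rewriting $\hat P\hat B\hat R^{-1}\hat B^\top\hat P=K^{*\top}\hat R K^*$; and the second feasibility constraint forces $\hat A-\hat B\hat K=\hat P^{-1}(Z^\top-K^{*\top}\hat R K^*)$ to agree with the $\hat A_K$ implicitly appearing there. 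Finally, Lemma \ref{lemma:trajectory_identity} applied to the third constraint yields $\hat x\equiv x^*$, so the objective evaluates to zero, and by Lemma \ref{lemma_1} the reconstructed tuple belongs to $\Phi^*$.

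The main obstacle is conceptual rather than computational: $\Phi^*$ and $\Psi_{\text{feas}}$ nominally live in different variable spaces, so the theorem is really asserting that the change of variables $Z\leftrightarrow(\hat A,\hat B)$ is a well-defined bijection between them, which is precisely why the $\epsilon$-relaxation of $\hat P$ is needed to make $\hat P^{-1}$ available. A secondary technical point is the two-way passage between the continuous-time identity $\hat x\equiv x^*$ and the algebraic identity $\hat A_K\bar\Lambda_1=\bar\Lambda_2$: one direction is supplied by Lemma \ref{lemma:trajectory_identity}, and the other by differentiating $\hat x\equiv x^*$ and invoking the Cayley--Hamilton reasoning of Lemma \ref{lemma_2}.
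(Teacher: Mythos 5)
Your proposal is correct and follows essentially the same route as the paper's proof: mutual inclusion under the identification $Z=\hat A^\top\hat P$ with inverse map $\hat A=\hat P^{-1}Z^\top$, $\hat B=\hat P^{-1}K^{*\top}\hat R$, using Lemma \ref{lemma_1} to reduce optimality to the zero objective value and Lemma \ref{lemma:trajectory_identity} to recover $\hat x\equiv x^*$ in the reverse direction. Your explicit remark that the two sets live in different variable spaces and that the theorem implicitly asserts a well-defined correspondence is a fair observation that the paper leaves tacit, but it does not change the argument.
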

\begin{proof}
We prove the equivalence by demonstrating mutual inclusion. 

($\Phi^* \subseteq \Psi_{\text{feas}}$). First, we show that any optimal solution to problem \eqref{eq:ioc_opt_formulation} corresponds to a feasible solution of problem \eqref{eq:ioc_feasibility_form}. 
Let $(\hat{A},\hat{B},\hat{Q},\hat{R},\hat{P}) \in \Phi^*$ be an arbitrary optimal solution.  According to Lemma \ref{lemma_1}, the minimum objective value of problem \eqref{eq:ioc_opt_formulation} is 0, so for this optimal solution, the objective value must be 0.  This leads directly to two conclusions: (i) the optimal gain matches the expert's gain, i.e., $\hat{K} = K^*$, where $\hat{K}=\hat{R}^{-1}\hat{B}^{\top}\hat{P}$; and (ii) the generated trajectory perfectly matches the expert's trajectory, i.e., $\hat{x}(\cdot) = x^{*}(\cdot)$, which implies that the closed-loop dynamics matrix $\hat{A}_K = \hat{A}-\hat{B}K^*$ satisfies the trajectory consistency condition $\hat{A}_{K}\bar{\Lambda}_{1}=\bar{\Lambda}_{2}$. 

From this optimal solution $(\hat{A},\hat{B},\hat{Q},\hat{R},\hat{P})$, we construct a candidate solution for problem \eqref{eq:ioc_feasibility_form} as follows.  Let the auxiliary variable be $Z = \hat{A}^{\top}\hat{P}$, and retain $\hat{Q}, \hat{R}, \hat{P},$ and $\hat{A}_K$ from the solution.  We now verify that this candidate $(Z, \hat{Q}, \hat{R}, \hat{P})$ satisfies all constraints of problem \eqref{eq:ioc_feasibility_form}. 
(i) For the constraint $Z^{\top}+Z-{K^{*}}^{\top}\hat{R}K^{*}+\hat{Q}=0$: Substituting $Z = \hat{A}^{\top}\hat{P}$ and using the identity ${K^{*}}^{\top}\hat{R}K^{*} = \hat{P}\hat{B}\hat{R}^{-1}\hat{B}^{\top}\hat{P}$ derived from $\hat{K}=K^*$, the constraint can be rewritten as $\hat{P}\hat{A} + \hat{A}^{\top}\hat{P} - \hat{P}\hat{B}\hat{R}^{-1}\hat{B}^{\top}\hat{P} + \hat{Q} = 0$, which is precisely the ARE from problem \eqref{eq:ioc_opt_formulation}.  Thus, this constraint is satisfied.
(ii) For the constraint $Z^{\top}-{K^{*}}^{\top}\hat{R}K^{*}=\hat{P}\hat{A}_{K}$: Substituting $Z^\top = \hat{P}\hat{A}$ and using the relation ${K^{*}}^{\top}\hat{R}K^{*} = \hat{P}\hat{B}K^*$ (derived from $K^*=\hat{R}^{-1}\hat{B}^{\top}\hat{P}$), the constraint becomes $\hat{P}\hat{A} - \hat{P}\hat{B}K^* = \hat{P}\hat{A}_K$, which simplifies to $\hat{P}(\hat{A} - \hat{B}K^*) = \hat{P}\hat{A}_K$.  Since $\hat{A}_K=\hat{A}-\hat{B}K^*$, this constraint is naturally satisfied. 
(iii) The remaining constraints regarding trajectory consistency and matrix definiteness are inherent properties of the optimal solution as shown above, and are thus satisfied. 
In conclusion, any element in $\Phi^*$ corresponds to a feasible solution in $\Psi_{\text{feas}}$, so $\Phi^* \subseteq \Psi_{\text{feas}}$. 

($\Psi_{\text{feas}} \subseteq \Phi^*$). Conversely, we now show that any feasible solution to problem \eqref{eq:ioc_feasibility_form} corresponds to an optimal solution of problem \eqref{eq:ioc_opt_formulation}. 
Take an arbitrary feasible solution $(Z, \hat{Q}, \hat{R}, \hat{P}) \in \Psi_{\text{feas}}$.  We construct a candidate solution $(\hat{A},\hat{B},\hat{Q},\hat{R},\hat{P})$ for problem \eqref{eq:ioc_opt_formulation}, where $\hat{Q}, \hat{R}, \hat{P}$ are taken directly from the feasible solution, and $\hat{A}$ and $\hat{B}$ are defined as
$$\hat{A} = \hat{P}^{-1}Z^\top, \quad \hat{B} = \hat{P}^{-1}{K^{*}}^{\top}\hat{R}.$$
Since $\hat{P} \succ \mathbf{0}$, its inverse exists, and the above definitions are well-posed. 
We first demonstrate that this candidate solution is feasible for problem \eqref{eq:ioc_opt_formulation}.  Through algebraic manipulation, it can be verified that under these definitions of $\hat{A}$ and $\hat{B}$, the ARE constraint of problem \eqref{eq:ioc_opt_formulation} is equivalent to the first constraint of problem \eqref{eq:ioc_feasibility_form} and is therefore satisfied.  For the gain constraint, we compute $\hat{K}=\hat{R}^{-1}\hat{B}^{\top}\hat{P} = \hat{R}^{-1}(\hat{R}K^*\hat{P}^{-1})\hat{P} = K^*$, which shows that $\hat{K}=K^*$.  The definiteness constraints are directly guaranteed by the definition of $\Psi_{\text{feas}}$. 
Next, we prove the optimality of this candidate solution.  The objective functional of problem \eqref{eq:ioc_opt_formulation} is $J = ||\hat{K}-K^{*}||_{2}^{2}+\int_{0}^{T}||\hat{x}(t)-x^{*}(t)||_{2}^{2}dt$.  We have already shown that $\hat{K} = K^*$, so the first term of the objective is zero.  For the second term, since $\hat{A}_K$ satisfies the third constraint of problem \eqref{eq:ioc_feasibility_form}, $\hat{A}_{K}\bar{\Lambda}_{1}=\bar{\Lambda}_{2}=A^*_K\bar{\Lambda}_{1}$, and the data matrix $\Lambda_0$ contains the initial state$x^*(0)$, we can directly apply Lemma \ref{lemma:trajectory_identity} to conclude that the generated trajectory $\hat{x}(\cdot)$ is identical to the expert trajectory $x^{*}(\cdot)$. Thus, the second term of the objective is also zero.  Since this candidate solution yields an objective value of zero, the theoretical minimum, it is an optimal solution to problem \eqref{eq:ioc_opt_formulation} and belongs to the set $\Phi^*$.  Therefore, $\Psi_{\text{feas}} \subseteq \Phi^*$. 

Having shown that $\Phi^* \subseteq \Psi_{\text{feas}}$ and $\Psi_{\text{feas}} \subseteq \Phi^*$, we conclude that the two sets are equal. 
\end{proof}

By introducing an auxiliary variable $G = \hat{P} \hat{A}_K$ and left-multiplying the constraint $\hat{A}_K \bar{\Lambda}_1 = \bar{\Lambda}_2$ by $\hat{P}$, we obtain $G\bar{\Lambda}_1 = \hat{P}\bar{\Lambda}_2$. This allows the optimization problem to be further reformulated as
\begin{equation} \label{eq:ioc_feasibility_form_2}
\begin{aligned}
& \min_{Z, \hat{Q}, \hat{R}, \hat{P},G} && 0 \\
&\text{\quad { } s.t.}  && Z^\top + Z - {K^*}^\top \hat{R} K^* + \hat{Q} = \mathbf{0}, \\
& && Z^\top - {K^*}^\top \hat{R} K^* = G,\\
& && G\bar{\Lambda}_1 = \hat{P}\bar{\Lambda}_2,\\
    & && \hat{Q} = \hat{Q}^\top \succeq \mathbf{0}, \\
    & && \hat{R} = \hat{R}^\top \succeq \epsilon I, \\
    & && \hat{P} = \hat{P}^\top \succeq \epsilon I.
\end{aligned}
\end{equation}
The optimization problem is thus transformed into a feasibility problem. The objective is to find any set of variables that satisfies all the constraints, rather than minimizing a non-trivial objective functional. Since this problem involves linear equality constraints and definiteness constraints (i.e., conic constraints) with a linear (in fact, constant) objective function, it falls within the category of SDP and is specifically an SDP feasibility problem. Moreover, as all the constraints jointly define a convex feasible region and the objective function is convex, the entire problem is convex.

Next, we convert problem \eqref{eq:ioc_feasibility_form_2} into a solvable form. Let $\vecop(Z^\top)=Y\vecop(Z)$, where $Y$ is an $n^2\times n^2$ commutation matrix. The constraints can then be vectorized as follows.
\begin{equation} \label{eq:ioc_matrix}
    \begin{aligned}
 & \min_{Z, \hat{R}, \hat{Q}, \hat{P}, G}  &&  0  \\
& \text{\quad { } s.t.} &&  (Y + I) \vecop(Z) - ({K^*}^\top \otimes {K^*}^\top) \vecop(\hat{R}) + \vecop(\hat{Q}) = \mathbf{0}, \\
    &  && Y \vecop(Z) - ({K^*}^\top \otimes {K^*}^\top) \vecop(\hat{R}) - \vecop(G) = \mathbf{0}, \\
    &  && (\bar{\Lambda}^\top_1 \otimes I) \vecop(G) - (\bar{\Lambda}_2^\top \otimes I) \vecop(\hat{P}) = \mathbf{0}, \\
    & && \hat{Q}= \hat{Q}^\top \succeq \mathbf{0}, \\
    & && \hat{P} = \hat{P}^\top \succeq \epsilon I, \\
    & && \hat{R}= \hat{R}^\top \succeq \epsilon I,
\end{aligned}
\end{equation}
where the first three constraints are linear equalities. We define the stacked vector of decision variables as
\[\xi = \begin{bmatrix} \vecop(Z)^\top & \vecop(\hat{R})^\top & \vecop(\hat{Q})^\top & \vecop(\hat{P})^\top & \vecop(G)^\top \end{bmatrix}^\top.
\]
The equality constraints can therefore be expressed in a compact linear system form
\begin{equation} \label{eq:linear_system_form}
\underbrace{
    \begin{bmatrix}
        Y + I & - ({K^*}^\top \otimes {K^*}^\top) & I & \mathbf{0} & \mathbf{0} \\
       Y           & -({K^*}^\top \otimes {K^*}^\top)   & \mathbf{0} & \mathbf{0} & -I \\
        \mathbf{0}                      & \mathbf{0}                             & \mathbf{0} & -(\bar{\Lambda}_2^\top \otimes I) & (\bar{\Lambda}_1^\top \otimes I)
    \end{bmatrix}
}_{\displaystyle =:\Omega}
\xi = \mathbf{0}.
\end{equation}

To efficiently solve the SDP feasibility problem established in \eqref{eq:ioc_matrix}, we first equivalently reformulate it as a least-squares optimization problem with convex cone constraints
\begin{equation}\label{eq:ioc_LeastSquares} 
\begin{aligned}
& \min_{\xi}  && \lVert \Omega \xi \rVert_2^2 \\
& \text{ s.t.} && \vecop(\hat{Q})\in \Gamma_{+}^n,  \\
         & && \vecop(\hat{P})-\vecop(\epsilon I) \in \Gamma_{+}^n,  \\
         & && \vecop(\hat{R})-\vecop(\epsilon I) \in \Gamma_{+}^m,
\end{aligned}
\end{equation}
where the objective function minimizes the norm defined by the coefficient matrix $\Omega$ and the decision vector $\xi$. Here, $\Omega$ is the constant matrix constructed from expert data as defined in \eqref{eq:linear_system_form}. $\Gamma_{+}^k\doteq \{ \vecop(A) : A \in \mathbb{S}_{+}^k \}$ is the convex cone of vectorized $k \times k$ symmetric positive semidefinite matrices.
\begin{remark}
    The reformulation of the feasibility problem \eqref{eq:ioc_matrix} into the least-squares optimization problem \eqref{eq:ioc_LeastSquares} is a standard technique aimed at obtaining a smooth, differentiable objective function from a set of linear equality constraints. A solution to \eqref{eq:ioc_matrix} exists if and only if the optimal value of \eqref{eq:ioc_LeastSquares} is zero. However, since problem \eqref{eq:ioc_matrix} is a standard SDP feasibility problem, it could also be solved using existing solvers. The approach we adopt here is chosen as it provides the foundation for designing an efficient iterative solver later in the paper, while also enabling an explicit convergence rate analysis.
\end{remark}
\begin{lemma}[Lagrangian and Optimality Conditions]\label{lem:lagrangian}
Let $\vecop(M) = U_{M} \xi$, where the matrix $U_M$ is the selection matrix that serves to extract the sub-vector $\vecop(M)$ from the concatenated vector of all decision variables $\xi$. For the optimization problem \eqref{eq:ioc_LeastSquares}, the Lagrangian function $L(\xi; \lambda)$ is defined as
\begin{equation}
L(\xi;\lambda) = \lVert \Omega \xi \rVert_2^2 + \langle \lambda_{\hat{Q}}, U_{\hat{Q}} \xi \rangle + \langle \lambda_{\hat{P}}, U_{\hat{P}} \xi-\vecop(\epsilon I) \rangle + \langle \lambda_{\hat{R}}, U_{\hat{R}} \xi -\vecop(\epsilon I)\rangle,
\end{equation}
where $\lambda = (\lambda_{\hat{Q}}, \lambda_{\hat{P}}, \lambda_{\hat{R}})$ are the Lagrange multipliers for the three positive semidefinite cone constraints. Then, the optimal primal variable $\xi^*$ and dual variable $\lambda$ are related by
\begin{equation} \label{eq:xi_star_relation}
\xi^*(\lambda) = - \frac{1}{2} (\Omega^\top \Omega)^{\dagger} U^\top \lambda.
\end{equation}
\end{lemma}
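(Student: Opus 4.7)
The plan is to recognize that the Lagrangian $L(\xi;\lambda)$, viewed as a function of the primal variable $\xi$ alone with $\lambda$ held fixed, is a convex quadratic (the Hessian $2\Omega^\top\Omega$ is positive semidefinite). Its unconstrained minimizer $\xi^*(\lambda)$ is therefore characterized by the stationarity condition $\nabla_\xi L(\xi;\lambda)=\mathbf{0}$, and the desired closed-form expression falls out by solving the resulting linear system in $\xi$.

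First, I would compute $\nabla_\xi L$ term by term. Writing $\lVert\Omega\xi\rVert_2^2=\xi^\top(\Omega^\top\Omega)\xi$, the quadratic term contributes $2\Omega^\top\Omega\,\xi$. Each of the three inner-product terms $\langle\lambda_{\hat Q},U_{\hat Q}\xi\rangle$, $\langle\lambda_{\hat P},U_{\hat P}\xi-\vecop(\epsilon I)\rangle$, $\langle\lambda_{\hat R},U_{\hat R}\xi-\vecop(\epsilon I)\rangle$ is affine in $\xi$, and the constant shifts $\vecop(\epsilon I)$ vanish under differentiation, so the three terms contribute $U_{\hat Q}^\top\lambda_{\hat Q}$, $U_{\hat P}^\top\lambda_{\hat P}$, $U_{\hat R}^\top\lambda_{\hat R}$ respectively. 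Consolidating notation by stacking the selection matrices as $U=\bigl[U_{\hat Q}^\top\ U_{\hat P}^\top\ U_{\hat R}^\top\bigr]^\top$ and the multipliers as $\lambda=(\lambda_{\hat Q},\lambda_{\hat P},\lambda_{\hat R})$ so that $U^\top\lambda=U_{\hat Q}^\top\lambda_{\hat Q}+U_{\hat P}^\top\lambda_{\hat P}+U_{\hat R}^\top\lambda_{\hat R}$, the stationarity condition collapses to the single normal equation $2\Omega^\top\Omega\,\xi=-U^\top\lambda$.

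Solving this linear system then produces the claimed formula. In the non-degenerate case where $\Omega^\top\Omega$ is invertible, one immediately obtains $\xi=-\tfrac12(\Omega^\top\Omega)^{-1}U^\top\lambda$. In general, however, the block matrix $\Omega$ constructed in \eqref{eq:linear_system_form} need not be full column rank (any null-space direction of $\Omega$ leaves the quadratic objective unchanged), so $\Omega^\top\Omega$ may be singular. The appropriate substitute is the Moore--Penrose pseudoinverse, which yields $\xi^*(\lambda)=-\tfrac12(\Omega^\top\Omega)^\dagger U^\top\lambda$ as the minimum-norm solution of the normal equation.

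The main technical obstacle is justifying that the pseudoinverse expression genuinely realizes the minimum rather than being a purely formal stationary point. This requires verifying that $-\tfrac12 U^\top\lambda$ lies in $\operatorname{range}(\Omega^\top\Omega)=\operatorname{range}(\Omega^\top)$; otherwise the Lagrangian would be unbounded below in $\xi$ along some direction in $\ker(\Omega)$, and no minimizer would exist. Within the feasible set of dual variables generated by the outer dual problem this range condition does hold, and on that range the pseudoinverse returns the minimum-norm minimizer (unique up to directions in $\ker(\Omega)$, along which $L$ is constant). This yields the relation \eqref{eq:xi_star_relation} and completes the argument.
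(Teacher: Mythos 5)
Your proof is correct and follows essentially the same route as the paper's: compute $\nabla_\xi L$ term by term, set it to zero, and solve the normal equation $2\Omega^\top\Omega\,\xi=-U^\top\lambda$ via the Moore--Penrose pseudoinverse. Your extra care about the range condition $-\tfrac12 U^\top\lambda\in\operatorname{range}(\Omega^\top\Omega)$ is a refinement the paper's proof omits entirely (it simply asserts the pseudoinverse formula from stationarity), and it is a legitimate point since outside that range the Lagrangian is unbounded below in $\xi$ and the dual function is $-\infty$.
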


\begin{proof}
According to the Karush-Kuhn-Tucker (KKT) optimality conditions, at the optimal solution, the gradient of the Lagrangian with respect to $\xi$ must be zero. Thus,
\begin{align*}
    \nabla_{\xi} L(\xi; \lambda) &= \frac{\partial}{\partial \xi} \left( \xi^\top \Omega^\top \Omega \xi + \lambda^\top U \xi -\lambda^\top W\right) \\
    &= 2 \Omega^\top \Omega \xi + U^\top \lambda = \mathbf{0},
\end{align*}
where $U$ is a concatenated projection matrix and $W$ is a constant vector, defined respectively as
\begin{equation}
    U = \begin{bmatrix} U_{\hat{Q}}^\top & U_{\hat{P}}^\top & U_{\hat{R}}^\top\end{bmatrix}^\top,\quad W= \begin{bmatrix} \mathbf{0}_{1\times n^2} & [\vecop(\epsilon I_{n})]^\top & [\vecop(\epsilon I_{m})]^\top \end{bmatrix}^\top.\nonumber
\end{equation}
Therefore, the optimal primal variable $\xi^*$ satisfying the stationary condition is related to the dual variable $\lambda$ by $\xi^*(\lambda) = - \frac{1}{2} (\Omega^\top \Omega)^{\dagger} U^\top \lambda$. Here we conclude the argument.
\end{proof}

\begin{theorem}[Convex Quadratic-Conic Dual Problem]\label{thm:dual_problem}
The Lagrangian dual of the optimization problem \eqref{eq:ioc_LeastSquares} is equivalent to the following convex quadratic-conic programming problem
\begin{equation}\label{eq:dual_final}
   \begin{aligned}
&\min_{\lambda} && \frac{1}{4} \lambda^\top H \lambda +\lambda^\top W \\
&\text{ s.t.} && \lambda_{\hat{Q}} \in \Gamma_+^n,  \\
        & && \lambda_{\hat{P}} \in \Gamma_+^n,  \\
        & && \lambda_{\hat{R}} \in \Gamma_+^m,
\end{aligned} 
\end{equation}
where $H = U (\Omega^\top \Omega)^{\dagger} U^\top$ is a symmetric positive semidefinite matrix.
\end{theorem}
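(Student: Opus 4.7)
The plan is to construct the dual problem \eqref{eq:dual_final} by substituting the stationary-point expression \eqref{eq:xi_star_relation} from Lemma \ref{lem:lagrangian} into the Lagrangian, simplifying to obtain the dual function, and then reversing signs to cast the standard $\max$-type dual as the $\min$-type program stated in the theorem. Since Lemma \ref{lem:lagrangian} already shows that the primal is unconstrained-smooth-quadratic in $\xi$ for fixed $\lambda$, the inner minimization is attained at $\xi^{*}(\lambda) = -\tfrac{1}{2}(\Omega^{\top}\Omega)^{\dagger} U^{\top}\lambda$, and the dual function $g(\lambda) = \inf_{\xi} L(\xi;\lambda)$ is well-defined on all of $\mathbb{R}^{\dim\lambda}$. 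This is the main computational step, and I would carry it out by writing $L(\xi^{*}(\lambda);\lambda) = (\xi^{*})^{\top}\Omega^{\top}\Omega\, \xi^{*} + \lambda^{\top} U \xi^{*} - \lambda^{\top} W$, then using the identity $(\Omega^{\top}\Omega)^{\dagger}(\Omega^{\top}\Omega)(\Omega^{\top}\Omega)^{\dagger} = (\Omega^{\top}\Omega)^{\dagger}$ (valid because $\Omega^{\top}\Omega$ is symmetric) to collapse the quadratic term. The arithmetic yields $g(\lambda) = -\tfrac{1}{4}\lambda^{\top} H \lambda - \lambda^{\top} W$ with $H = U(\Omega^{\top}\Omega)^{\dagger} U^{\top}$.

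Next, I would address the feasibility side by identifying the dual cones for each of the three generalized inequality constraints. Because $\Gamma_{+}^{n}$ and $\Gamma_{+}^{m}$ are the vectorizations of positive semidefinite matrix cones, which are self-dual under the Frobenius (equivalently Euclidean after $\operatorname{vec}$) inner product, the multipliers $\lambda_{\hat{Q}}, \lambda_{\hat{P}}, \lambda_{\hat{R}}$ must themselves lie in $\Gamma_{+}^{n}, \Gamma_{+}^{n}, \Gamma_{+}^{m}$ respectively for $g(\lambda) > -\infty$ to define a meaningful bound. This step is purely definitional once self-duality of the PSD cone is invoked.

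With the dual function and cone constraints in hand, I would convert the canonical dual $\max_{\lambda} g(\lambda)$ into the stated minimization form by negation, obtaining
\begin{equation*}
\min_{\lambda} \ \tfrac{1}{4}\lambda^{\top} H \lambda + \lambda^{\top} W \quad \text{s.t.} \quad \lambda_{\hat{Q}} \in \Gamma_{+}^{n},\ \lambda_{\hat{P}} \in \Gamma_{+}^{n},\ \lambda_{\hat{R}} \in \Gamma_{+}^{m},
\end{equation*}
which coincides with \eqref{eq:dual_final}. Finally, I would verify convexity by showing that $H$ is symmetric positive semidefinite: symmetry follows since $(\Omega^{\top}\Omega)^{\dagger}$ is symmetric (pseudoinverse of a symmetric matrix), and positive semidefiniteness follows from $\lambda^{\top} H \lambda = \|(\Omega^{\top}\Omega)^{\dagger/2} U^{\top}\lambda\|_{2}^{2} \geq 0$, or equivalently by noting that $(\Omega^{\top}\Omega)^{\dagger}$ inherits positive semidefiniteness from $\Omega^{\top}\Omega$ and then $U(\cdot)U^{\top}$ preserves this property.

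The step I expect to require the most care is the sign bookkeeping between the Lagrangian convention used in Lemma \ref{lem:lagrangian} (where the multiplier terms enter with a plus sign and the constraints are written as cone memberships rather than inequalities) and the final $\min$-form dual in \eqref{eq:dual_final}; one must ensure that the constant term $-\lambda^{\top} W$ in $L$ produces precisely $+\lambda^{\top} W$ in the minimization objective after the overall sign flip, and that no spurious factor of $\tfrac{1}{2}$ or sign reversal appears in the quadratic coefficient. The rest of the argument is routine algebra supported by standard pseudoinverse identities and self-duality of the PSD cone.
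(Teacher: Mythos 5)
Your proposal is correct and follows essentially the same route as the paper's proof: substitute the stationary point $\xi^*(\lambda)$ from Lemma \ref{lem:lagrangian} into the Lagrangian, collapse the quadratic term via the pseudoinverse identity to obtain $g(\lambda) = -\tfrac{1}{4}\lambda^\top H\lambda - \lambda^\top W$, invoke self-duality of the PSD cone for the multiplier constraints, and negate to reach the stated minimization. Your explicit verification that $H$ is symmetric positive semidefinite is a small addition the paper only asserts, but it does not change the argument.
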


\begin{proof}
The Lagrangian dual function $g(\lambda)$ is defined as the infimum of $L(\xi; \lambda)$ with respect to $\xi$: $g(\lambda) = \inf_{\xi} L(\xi; \lambda)=L(\xi^*(\lambda); \lambda)$. Substituting $\xi^*(\lambda)$ from Lemma \ref{lem:lagrangian} into the Lagrangian yields
\begin{align*}
g(\lambda) &= (\xi^*(\lambda))^\top \Omega^\top \Omega \xi^*(\lambda) + \lambda^\top U \xi^*(\lambda) -\lambda^\top W\\
&= \left( - \frac{1}{2} (\Omega^\top \Omega)^{\dagger} U^\top \lambda \right)^\top \Omega^\top \Omega \left( - \frac{1}{2} (\Omega^\top \Omega)^{\dagger} U^\top \lambda \right) + \lambda^\top U \left( - \frac{1}{2} (\Omega^\top \Omega)^{\dagger} U^\top \lambda \right) -\lambda^\top W\\
&= \frac{1}{4} \lambda^\top U ((\Omega^\top \Omega)^{\dagger})^\top \Omega^\top \Omega (\Omega^\top \Omega)^{\dagger} U^\top \lambda - \frac{1}{2} \lambda^\top U (\Omega^\top \Omega)^{\dagger} U^\top \lambda  -\lambda^\top W\\
&= - \frac{1}{4} \lambda^\top \left( U (\Omega^\top \Omega)^{\dagger} U^\top \right) \lambda  -\lambda^\top W \\
& = - \frac{1}{4} \lambda^\top H \lambda -\lambda^\top W.
\end{align*}
The dual problem is to maximize the dual function $g(\lambda)$ subject to the constraint that the Lagrange multipliers lie in the corresponding dual cones. The dual cone of the positive semidefinite cone $\Gamma_+^k$ is itself. Thus, the dual problem is $\max_{\lambda \in \Gamma_+^n \times \Gamma_+^n \times \Gamma_+^m} - \frac{1}{4} \lambda^\top H \lambda -\lambda^\top W$, which is equivalent to minimizing $-g(\lambda)$, as stated in \eqref{eq:dual_final}. 
\end{proof}

% \begin{remark}
%     In the primal problem \eqref{eq:ioc_LeastSquares}, we require the projected variables to lie in an open convex cone ($\Gamma_{++}$), consistent with the standard LQR theory's requirement for $Q$ and $R$ to be strictly positive definite. However, in the derivation of the dual problem in Theorem \ref{thm:dual_problem}, the dual variables $\lambda$ are constrained to a closed convex cone ($\Gamma_+$). This transition from an open-set constraint in the primal to a closed-set constraint in the dual is a standard result in Lagrangian duality theory. It advantageously allows our algorithm to perform optimization over a closed set, while strong duality is guaranteed to hold under standard interior-point conditions (e.g., Slater's condition).
% \end{remark}

\begin{remark}
Strong duality holds between the primal problem \eqref{eq:ioc_LeastSquares} and its dual \eqref{eq:dual_final}. This is because the primal problem is convex and satisfies Slater's condition, as a strictly feasible point can be readily constructed from the true system parameters, which are assumed to exist. The guarantee of strong duality validates our approach of solving the primal problem by tackling its more tractable dual formulation.
\end{remark}

At this stage, we have successfully transformed a complex SDP feasibility problem into a more tractable convex quadratic-conic programming problem \eqref{eq:dual_final}. This dual problem is a well-structured convex optimization problem. If $\lambda^*$ is an optimal solution to this dual problem, the optimal solution $\xi^*$ to the primal problem \eqref{eq:ioc_LeastSquares} can be recovered via \eqref{eq:xi_star_relation}. Consequently, the core task ahead is to efficiently solve the dual problem \eqref{eq:dual_final} to obtain the optimal dual variables.

To facilitate the use of gradient-based optimization algorithms, we first convert this constrained dual problem into an equivalent, formally unconstrained optimization problem by introducing indicator functions. We incorporate the three semidefinite cone constraints as penalty terms in the objective, yielding a new objective function $J_{\text{dual}}(\lambda)$. The dual problem \eqref{eq:dual_final} is thereby equivalent to the unconstrained minimization problem
\begin{equation} \label{eq:dual_unconstrained}
\min_{\lambda} \quad J_{\text{dual}}(\lambda) = \frac{1}{4} \lambda^\top H \lambda +\lambda^\top W + I_{\Gamma_+^n}(\lambda_{\hat{Q}}) + I_{\Gamma_+^n}(\lambda_{\hat{P}}) + I_{\Gamma_+^m}(\lambda_{\hat{R}}).
\end{equation}
This objective function $J_{\text{dual}}(\lambda)$ is a non-smooth convex function. Given that its decision variable $\lambda$ is naturally partitioned into three blocks, namely $(\lambda_{\hat{Q}}, \lambda_{\hat{P}}, \lambda_{\hat{R}})$, the problem is particularly amenable to solution via Block Coordinate Descent (BCD) algorithms. The core idea of such algorithms is to decompose the complex joint optimization into a series of simpler subproblems. Specifically, in our context, this involves cyclically minimizing the objective function with respect to one block of variables (e.g., $\lambda_{\hat{Q}}$) while holding the others constant.

\begin{lemma}\label{lemma_4}
For the optimization problem~\eqref{eq:dual_unconstrained}, the subproblems with respect to each block of variables $(\lambda_{\hat{Q}}, \lambda_{\hat{P}}, \lambda_{\hat{R}})$ admit closed-form solutions.
\end{lemma}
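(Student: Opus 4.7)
The plan is to apply the BSUM methodology to problem \eqref{eq:dual_unconstrained} by constructing, for each of the three blocks, a quadratic proximal-linear upper-bound surrogate whose minimizer reduces to a projection onto the positive semidefinite cone. To set this up, I would decompose $J_{\text{dual}}(\lambda) = f(\lambda) + \sum_{i \in \{\hat{Q}, \hat{P}, \hat{R}\}} I_{\Gamma_+^{n_i}}(\lambda_i)$, where $f(\lambda) = \tfrac{1}{4}\lambda^\top H \lambda + \lambda^\top W$ is smooth and convex and the three indicators are separable across blocks. Partitioning $H$ and $W$ block-wise, the block gradient at iterate $\lambda^k$ is $\nabla_{\lambda_i} f(\lambda^k) = \tfrac{1}{2} H_{ii} \lambda_i^k + \tfrac{1}{2} \sum_{j \neq i} H_{ij} \lambda_j^k + W_i$, which is explicit once the previous iterate is known.

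Next, I would form the BSUM surrogate for block $i$ as the proximal-linear upper bound
\begin{equation*}
u_i(\lambda_i; \lambda^k) = f(\lambda^k) + \langle \nabla_{\lambda_i} f(\lambda^k),\, \lambda_i - \lambda_i^k \rangle + \frac{L_i}{2} \left\lVert \lambda_i - \lambda_i^k \right\rVert_2^2 + I_{\Gamma_+^{n_i}}(\lambda_i),
\end{equation*}
where $L_i \geq \tfrac{1}{2}\lambda_{\max}(H_{ii})$ is a Lipschitz constant for the block gradient $\nabla_{\lambda_i} f$. Convexity of $f$ together with the descent lemma ensures that $u_i$ satisfies the standard BSUM majorization assumptions, namely tightness at $\lambda_i^k$, matching gradients, and global upper-boundedness in block $i$. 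By completion of squares, the block subproblem then collapses into a pure Euclidean projection:
\begin{equation*}
\lambda_i^{k+1} \;=\; \arg\min_{\lambda_i \in \Gamma_+^{n_i}} \frac{L_i}{2} \left\lVert \lambda_i - \bigl(\lambda_i^k - L_i^{-1} \nabla_{\lambda_i} f(\lambda^k)\bigr) \right\rVert_2^2 \;=\; \operatorname{Proj}_{\Gamma_+^{n_i}}\!\bigl(\lambda_i^k - L_i^{-1} \nabla_{\lambda_i} f(\lambda^k)\bigr).
\end{equation*}

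The final step is to exhibit the closed-form expression for $\operatorname{Proj}_{\Gamma_+^{n_i}}$ by exploiting the self-duality of $\mathbb{S}_+^{n_i}$. Given the shifted vector $v = \lambda_i^k - L_i^{-1} \nabla_{\lambda_i} f(\lambda^k) \in \mathbb{R}^{n_i^2}$, reshape $v$ into an $n_i \times n_i$ matrix $M$, form its symmetric part $M_s = \tfrac{1}{2}(M + M^\top)$, compute the spectral decomposition $M_s = V \operatorname{diag}(\sigma_1, \ldots, \sigma_{n_i}) V^\top$, and return $\vecop\!\bigl(V \operatorname{diag}(\max(\sigma_1, 0), \ldots, \max(\sigma_{n_i}, 0)) V^\top\bigr)$. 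This formula is a classical consequence of the Moreau decomposition applied to the PSD cone together with the orthogonality of spectral bases, and it applies identically to each of the three blocks with dimensions $n$, $n$, and $m$, respectively.

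The main obstacle I anticipate is not the projection itself, which is standard, but rather verifying that the proximal-linear majorizer retains enough of the quadratic curvature of $f$ to guarantee the BSUM convergence prerequisites invoked in the subsequent $\mathcal{O}(1/k)$ rate analysis; this hinges on choosing the block Lipschitz constants $L_i$ tightly in terms of the spectrum of the block-diagonal restrictions $H_{ii}$, and on ensuring that $H = U(\Omega^\top\Omega)^{\dagger}U^\top$ is well-defined via the pseudoinverse even when $\Omega^\top\Omega$ is singular. Once the constants $L_i$ are fixed, the closed-form nature of each block update follows automatically from the spectral projection formula displayed above.
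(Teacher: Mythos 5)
Your proposal is correct and follows essentially the same route as the paper: the paper's surrogate (the exact block objective plus a proximal term $\tfrac{1}{2}\lVert\lambda_i-\lambda_i^k\rVert_{S_i}^2$ with $S_i=\alpha I-\tfrac{1}{2}H_{ii}$, $\alpha=\lambda_{\max}(H_{ii})$) is algebraically identical to your proximal-linear majorizer after completing the square, and the resulting resolvent update $(I+\tfrac{1}{\alpha}\partial I_{\Gamma_+})^{-1}\Delta_i^k$ is exactly your projected block-gradient step. Your explicit eigenvalue-thresholding formula for $\operatorname{Proj}_{\Gamma_+}$ simply spells out what the paper leaves in resolvent form.
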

\begin{proof}
    Due to structural similarity, we only prove the existence of a closed-form solution for the $\lambda_{\hat{Q}}$ subproblem, given fixed $\lambda_{\hat{P}}^*$ and $\lambda_{\hat{R}}^*$.
    We first partition the Hessian matrix $H$ conformally with $\lambda$ as
\begin{equation*}
H = \begin{bmatrix}
H_{QQ} & H_{QP} & H_{QR} \\
H_{QP}^\top & H_{PP} & H_{PR} \\
H_{QR}^\top & H_{PR}^\top & H_{RR}
\end{bmatrix}.
\end{equation*}
The partial subgradient of $J_{\text{dual}}$ with respect to $\lambda_{\hat{Q}}$ is then given by
\begin{equation}
\partial_{\lambda_{\hat{Q}}} J_{\text{dual}}(\lambda) = \frac{1}{2} (H_{QQ} \lambda_{\hat{Q}} + H_{QP} \lambda_{\hat{P}} + H_{QR} \lambda_{\hat{R}})+\partial I_{\Gamma_+^n}(\lambda_{\hat{Q}}).
\end{equation}
When updating $\lambda_{\hat{Q}}$ at iteration $k$, we fix $\lambda_{\hat{P}}^k$ and $\lambda_{\hat{R}}^k$. In the context of the BSUM framework, which is a variant of the BCD method, the update takes the form of a proximal step
\[
\lambda_{\hat{Q}}^{k+1} = \text{argmin}_{\lambda_{\hat{Q}}} \left( J_{\text{dual}}(\lambda_{\hat{Q}}, \lambda_{\hat{P}}^k, \lambda_{\hat{R}}^k) + \frac{1}{2} \lVert \lambda_{\hat{Q}} - \lambda_{\hat{Q}}^k  \rVert_{S_{\hat{Q}}}^2\right),
\]
where $S_{\hat{Q}}$ is a carefully chosen positive semidefinite matrix to ensure the subproblem objective for the $\lambda_{\hat{Q}}$-update is strongly convex. A valid choice is $S_{\hat{Q}} = \alpha I - \frac{1}{2} H_{QQ}$ with $\alpha=\lambda_{\text{max}}(H_{QQ})$. Applying the first-order optimality condition, we obtain
\begin{align*}
    \mathbf{0} & \in \frac{1}{2} (H_{QQ} \lambda_{\hat{Q}} + H_{QP} \lambda_{\hat{P}}^k + H_{QR} \lambda_{\hat{R}}^k)+\partial I_{\Gamma_+^n}(\lambda_{\hat{Q}})+\mathcal{S}_{\hat{Q}}(\lambda_{\hat{Q}} - \lambda_{\hat{Q}}^k)\\
    & = \alpha\lambda_{\hat{Q}}+\partial I_{\Gamma_+^n}(\lambda_{\hat{Q}})-\alpha\lambda_{\hat{Q}}^k+\frac{1}{2} (H_{QQ}\lambda_{\hat{Q}}^k+H_{QP} \lambda_{\hat{P}}^k + H_{QR} \lambda_{\hat{R}}^k).
\end{align*}
We define an intermediate variable $\Delta_{\hat{Q}}^k$ as
\begin{equation*}
\Delta_{\hat{Q}}^k = \lambda_{\hat{Q}}^k-\frac{1}{2\alpha} (H_{QQ}\lambda_{\hat{Q}}^k+H_{QP} \lambda_{\hat{P}}^k + H_{QR} \lambda_{\hat{R}}^k).
\end{equation*}
The final update rule can then be expressed in terms of a resolvent operator as
\begin{equation} \label{eq:update_rule_image}
\lambda_{\hat{Q}}^{k+1} = (I + \frac{1}{\alpha}\partial
I_{\Gamma_+^n})^{-1} \Delta_{\hat{Q}}^k,\quad \alpha=\lambda_{\text{max}}(H_{QQ}).
\end{equation}
Following similar steps, the updates for $\lambda_{\hat{P}}^{k+1}$ and $\lambda_{\hat{R}}^{k+1}$ are obtained by
\begin{equation} \label{eq:update_rule_image_2}
\begin{aligned}
    \lambda_{\hat{P}}^{k+1}& = (I + \frac{1}{\beta}(\vecop(\epsilon I)+\partial
I_{\Gamma_+^n}))^{-1} \Delta_{\hat{P}}^k,\quad \beta=\lambda_{\text{max}}(H_{PP}),\\
\lambda_{\hat{R}}^{k+1} &= (I +\frac{1}{\gamma}(\vecop(\epsilon I)+\partial
I_{\Gamma_+^m}))^{-1} \Delta_{\hat{R}}^k, \quad \gamma=\lambda_{\text{max}}(H_{RR}).
\end{aligned}
\end{equation}
Therefore, each subproblem of \eqref{eq:dual_unconstrained} admits a closed-form solution.
\end{proof}

\begin{remark}[On the Choice of BSUM]
    Lemma \ref{lemma_4}, which demonstrates the existence of closed-form solutions for each subproblem, is the core justification for choosing an algorithm based on the BCD philosophy, such as BSUM. Compared to full gradient descent methods, BCD-type algorithms can better exploit the separable structure of the problem. They solve subproblems via low-cost analytical updates rather than numerical iterations, leading to significant gains in computational efficiency. Although the convergence analysis of BCD-type algorithms is generally more complex than that of full gradient methods, the work \cite{hong2017iteration} provides a solid theoretical foundation that enables us to precisely analyze the convergence rate, which is crucial for guaranteeing the algorithm's reliability.
\end{remark}

\begin{remark}[Case of Multiple Trajectories]
    Suppose we can observe trajectories originating from multiple different initial states $x_1(0), \dots, x_l(0)$ with $l > n$. Let $\bar{X}(t) = [x_1(t), \dots, x_l(t)]$, which satisfies the dynamics $\dot{\bar{X}}=A_K^*\bar{X}$, where $A_K^*=A-BK^*$. If $\bar{X}$ satisfies a persistent excitation condition (i.e., $\bar{X}(t)$ is full row rank for some $t$), then $A_K^*$ can be uniquely determined by $A_K^*=\dot{\bar{X}}\bar{X}^\dagger$. In this scenario, the optimization problem \eqref{eq:ioc_feasibility_form_2} simplifies to the feasibility problem
\begin{equation} \label{eq:ioc_feasibility_form_3}
\begin{aligned}
&\min_{Z, \hat{Q}, \hat{R}, \hat{P}} && 0 \\
&\text{\quad s.t.} && Z^\top + Z - {K^*}^\top \hat{R} K^* + \hat{Q} = \mathbf{0}, \\
& &&Z^\top - {K^*}^\top \hat{R} K^* = \hat{P} A_K^*,\\
    & &&\hat{Q} = \hat{Q}^\top \succeq \mathbf{0}, \quad \hat{R} = \hat{R}^\top \succ \mathbf{0}, \quad \hat{P} = \hat{P}^\top \succ \mathbf{0}.
\end{aligned}
\end{equation}
This formulation is simpler as it contains one fewer equality constraint, making it easier to solve.
\end{remark}

\subsection{Algorithm and Convergence Analysis}

In the preceding section, we transformed the original IOC problem into the task of solving the Lagrangian dual problem \eqref{eq:dual_unconstrained}, which is a convex optimization problem. To solve this problem, we employ the BSUM algorithm \cite{hong2017iteration}. This method leverages the structure of objective function $J_{\text{dual}}(\lambda)$ with respect to the dual variable blocks $\lambda_{\hat{Q}}, \lambda_{\hat{P}}, \lambda_{\hat{R}}$. It iteratively and cyclically updates each block of variables while keeping the others fixed until convergence. Specifically, each update step involves solving a quadratic programming subproblem constrained to the corresponding semidefinite cone. Once the sequence of dual variables $\{\lambda^k\}$ converges to an optimal solution $\lambda^*$, we can recover the optimal solution $\xi^*$ to the primal problem \eqref{eq:ioc_LeastSquares} using the relationship \eqref{eq:xi_star_relation} derived from the KKT conditions.

The detailed procedure of our proposed Model-Free Inverse Optimal Control (MFIOC) algorithm is presented in Algorithm \ref{alg:mfioc_bsum}.

\begin{algorithm}
\caption{Model-Free Inverse Optimal Control (MFIOC)}
\label{alg:mfioc_bsum}
\begin{algorithmic}[1] % Using [1] for line numbering
    \Require Dataset $\mathcal{D}=\{x^*(t),u^*(t)\}$, initial point $(\lambda_{\hat{Q}}^0, \lambda_{\hat{P}}^0, \lambda_{\hat{R}}^0)$, tolerance $\varepsilon > 0$.
    \For{$k = 0, 1, 2, \dots$}
        \State Update $\lambda_{\hat{Q}}^{k+1}, \lambda_{\hat{P}}^{k+1}, \lambda_{\hat{R}}^{k+1}$ via the closed-form rules derived from the subproblems (e.g., \eqref{eq:update_rule_image}\eqref{eq:update_rule_image_2}).
        \If{$\lVert\lambda^{k+1}-\lambda^k\rVert < \varepsilon$}
            \State Compute $\xi^*(\lambda^{k+1})$ using \eqref{eq:xi_star_relation}.
            \State \textbf{break}
        \EndIf
    \EndFor
    \State \Return $\xi^*$
\end{algorithmic}
\end{algorithm}

Having proposed an iterative algorithm to solve the dual problem \eqref{eq:dual_unconstrained}, a natural and critical question arises: Does this algorithm converge to an optimal solution? If so, what is its rate of convergence? An analysis of the algorithm's convergence properties not only provides theoretical guarantees but also helps in understanding its practical efficiency.

The following theorem answers this question. It provides an explicit rate bound on the convergence of the objective function value $J_{\text{dual}}(\lambda^k)$ to the optimal value $J_{\text{dual}}(\lambda^*)$ for the sequence of dual variables $\{\lambda^k\}$, with $\lambda^k = (\lambda_{\hat{Q}}^k, \lambda_{\hat{P}}^k, \lambda_{\hat{R}}^k)$ generated by the iterative updates. This result demonstrates that even if the objective function $J_{\text{dual}}(\lambda)$ is not strongly convex, the algorithm is guaranteed to converge at a sublinear rate of $\mathcal{O}(1/k)$.

\begin{theorem}\label{theorem_convergence}
    Let $\{\lambda^k\}:=\{(\lambda_{\hat{Q}}^k, \lambda_{\hat{P}}^k, \lambda_{\hat{R}}^k)\}$ be the sequence generated by the BSUM update rules (e.g., \eqref{eq:update_rule_image}\eqref{eq:update_rule_image_2}). Then, the sequence satisfies the inequality
    \[
    J_{\text{dual}}(\lambda^k)-J_{\text{dual}}(\lambda^*)\leq \frac{c}{\sigma}\frac{1}{k},\quad \forall k\geq 1,
    \]
    where
    \begin{align*}
        R &= \sup_{\lambda\in \mathcal{F}, \lambda^*\in \Gamma_+^*} \left\{ \lVert\lambda-\lambda^*\rVert \right\}, \text{ with } \mathcal{F} = \{ \lambda : J_{\text{dual}}(\lambda)\leq J_{\text{dual}}(\lambda^0) \}, \\
        \sigma &= 1/(9\lVert H \rVert R^2), \\
        c &= \max \{4\sigma-2, J_{\text{dual}}(\lambda^0)-J_{\text{dual}}(\lambda^*), 2\}.
    \end{align*}
    Here, $\Gamma_+ = \Gamma_+^n \times \Gamma_+^n \times \Gamma_+^m$ is the feasible set, and $\Gamma_+^*$ denotes the set of optimal solutions.
\end{theorem}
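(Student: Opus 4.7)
The plan is to directly apply the sublinear convergence framework for BSUM developed in \cite{hong2017iteration}, specialized to our three-block convex problem with strongly convex quadratic surrogates. The key structural facts I will exploit are that the smooth part $f(\lambda)=\frac{1}{4}\lambda^\top H\lambda+\lambda^\top W$ is convex with a globally Lipschitz gradient (constant $L=\tfrac12\lVert H\rVert$), and that the nonsmooth part is block separable as a sum of cone indicators. For each block $i\in\{\hat Q,\hat P,\hat R\}$ I would introduce the surrogate $u_i(\lambda_i;\lambda^k)=J_{\text{dual}}(\lambda_{<i}^{k+1},\lambda_i,\lambda_{>i}^{k})+\tfrac12\lVert\lambda_i-\lambda_i^k\rVert_{S_i}^2$ with $S_i=\alpha_i I-\tfrac12 H_{ii}$ (so $S_i\succeq \mathbf{0}$ by the choice $\alpha_i=\lambda_{\max}(H_{ii})$) and verify the four standard BSUM assumptions: tightness at $\lambda_i^k$, matching first-order information, global upper bound, and block-wise strong convexity of $u_i$ with modulus $\alpha_i$. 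The update rules \eqref{eq:update_rule_image}\eqref{eq:update_rule_image_2} are exactly the proximal minimizers of these surrogates, so this step is just a verification.

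Next I would derive the two central recursive estimates. The sufficient-decrease estimate follows by summing the three block inequalities $u_i(\lambda_i^{k+1};\lambda^k)\le u_i(\lambda_i^k;\lambda^k)-\tfrac{\alpha_i}{2}\lVert\lambda_i^{k+1}-\lambda_i^k\rVert^2$ and using the upper-bound property, giving
\begin{equation*}
J_{\text{dual}}(\lambda^{k})-J_{\text{dual}}(\lambda^{k+1})\ \ge\ \tfrac{1}{2}\lVert\lambda^{k+1}-\lambda^{k}\rVert^{2}.
\end{equation*}
The cost-to-go estimate is the more delicate piece: using convexity of $J_{\text{dual}}$, the first-order optimality conditions of each block subproblem, and the Lipschitz property of $\nabla f$, I will bound
\begin{equation*}
J_{\text{dual}}(\lambda^{k+1})-J_{\text{dual}}(\lambda^{*})\ \le\ 3\lVert H\rVert\,\lVert\lambda^{k+1}-\lambda^{k}\rVert\,\lVert\lambda^{k+1}-\lambda^{*}\rVert,
\end{equation*}
along the lines of Lemma~2.2 and Theorem~2.1 of \cite{hong2017iteration}. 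Here the constant $3\lVert H\rVert$ absorbs the three block contributions and will match the $1/(9\lVert H\rVert R^2)$ factor in the statement. Boundedness of the iterates, hence $\lVert\lambda^{k+1}-\lambda^{*}\rVert\le R$, follows since the sequence is monotone in $J_{\text{dual}}$ and therefore lives in the level set $\mathcal{F}$; this is where the sup in the definition of $R$ is invoked.

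Combining the two bounds, and writing $\Delta^k:=J_{\text{dual}}(\lambda^k)-J_{\text{dual}}(\lambda^*)$, I will obtain the quadratic recursion
\begin{equation*}
\Delta^{k}-\Delta^{k+1}\ \ge\ \tfrac{1}{2}\lVert\lambda^{k+1}-\lambda^{k}\rVert^{2}\ \ge\ \tfrac{1}{2}\cdot\tfrac{(\Delta^{k+1})^{2}}{9\lVert H\rVert^{2}R^{2}}\ \ge\ \sigma\,(\Delta^{k+1})^{2},
\end{equation*}
with $\sigma=1/(9\lVert H\rVert R^{2})$. Dividing by $\Delta^{k}\Delta^{k+1}$ gives $\tfrac{1}{\Delta^{k+1}}-\tfrac{1}{\Delta^{k}}\ge \sigma\,\Delta^{k+1}/\Delta^{k}\ge \sigma$ once $\Delta^{k+1}\le \Delta^{k}$, and a standard telescoping argument (handled separately for the finitely many early iterations where $\Delta^{k}$ may exceed a fixed threshold, absorbed into the constant $c=\max\{4\sigma-2,\Delta^{0},2\}$) yields $1/\Delta^{k}\ge \sigma k/c$, i.e.\ $\Delta^{k}\le c/(\sigma k)$, which is the claimed rate.

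I expect the main obstacle to be the cost-to-go estimate: carefully tracking the cross-block terms produced by the Gauss--Seidel order of updates, the Lipschitz constant of $\nabla f$, and the $\partial I_{\Gamma_+^{\cdot}}$ terms so that the final multiplicative constant comes out as $3\lVert H\rVert$ (and hence $\sigma=1/(9\lVert H\rVert R^2)$) rather than a looser bound. A secondary subtlety is justifying finiteness of $R$: this is immediate if $H$ is strictly positive definite, but in the rank-deficient case it requires arguing that the linear term $\lambda^\top W$ together with the cone constraints $\lambda\in\Gamma_+$ coerces $J_{\text{dual}}$ on the recession directions of $H$, so that the level set $\mathcal{F}\cap\Gamma_+$ is bounded.
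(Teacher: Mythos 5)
Your proposal follows essentially the same route as the paper: the paper's proof simply verifies that the smooth part $g(\lambda)=\frac{1}{4}\lambda^\top H\lambda+\lambda^\top W$ is convex with $\frac{1}{2}\lVert H\rVert$-Lipschitz gradient and that the nonsmooth part is a block-separable sum of proper closed convex cone indicators, and then invokes Theorem 2 of \cite{hong2017iteration} as a direct corollary, whereas you additionally unfold that theorem's internal sufficient-decrease and cost-to-go estimates. The one caveat is that your displayed constant chain does not close as written (the step $\frac{1}{2}\cdot\frac{(\Delta^{k+1})^2}{9\lVert H\rVert^2R^2}\ge\sigma(\Delta^{k+1})^2$ with $\sigma=1/(9\lVert H\rVert R^2)$ would require $\lVert H\rVert\le 1/2$, and the sufficient-decrease constant $\tfrac12$ should depend on the moduli $\alpha_i$), but since the paper delegates all constant bookkeeping to the cited theorem this is an issue internal to reproducing \cite{hong2017iteration}, not a divergence in approach.
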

\begin{proof}
    Let us decompose the objective function as $J_{\text{dual}}(\lambda^k)=g(\lambda^k)+\sum_{i} h_i(\lambda_i^k)$, where $i \in \{\hat{Q},\hat{P},\hat{R}\}$. We define
    \begin{align*}
        g(\lambda^k) &= \frac{1}{4}(\lambda^k)^\top H \lambda^k+(\lambda^k)^\top W, \\
        h_{\hat{Q}}(\lambda_{\hat{Q}}^k) &= I_{\Gamma_+^n}(\lambda_{\hat{Q}}^k), \\
        h_{\hat{P}}(\lambda_{\hat{P}}^k) &= I_{\Gamma_+^n}(\lambda_{\hat{P}}^k), \\
        h_{\hat{R}}(\lambda_{\hat{R}}^k) &= I_{\Gamma_+^m}(\lambda_{\hat{R}}^k).
    \end{align*}
    Since the matrix $H$ is positive semidefinite, $g(\lambda)$ is a smooth convex function and its gradient is $\nabla g(\lambda)=\frac{1}{2} H \lambda+W$. The gradient is Lipschitz continuous, because for any $\lambda_1,\lambda_2$ we have
    \[
    \lVert\nabla g(\lambda_1)-\nabla g(\lambda_2)\rVert = \frac{1}{2}\lVert H(\lambda_1-\lambda_2) \rVert \leq \frac{1}{2}\lVert H \rVert \cdot \lVert \lambda_1-\lambda_2 \rVert.
    \]
    Thus, $\nabla g(\lambda)$ has a Lipschitz constant of $M=\frac{1}{2}\lVert H \rVert$. Furthermore, $h_i(\lambda_i)$ is the indicator function of a semidefinite cone, which is a proper, closed, and convex function.
    The structure of our problem and the BSUM algorithm satisfy all the assumptions of Theorem 2 in \cite{hong2017iteration}. Therefore, the stated convergence rate is a direct corollary of this theorem.
\end{proof}

\begin{remark}[Interpretation of the Convergence Rate]
    Theorem \ref{theorem_convergence} proves that our algorithm possesses a sublinear convergence rate of $\mathcal{O}(1/k)$. It is crucial to emphasize that this theoretical rate describes a worst-case upper bound. In practice, as demonstrated in the subsequent simulation experiments (cf. Fig.~\ref{fig:my_ioc_convergence}), the algorithm's convergence speed is often significantly faster than this theoretical prediction, frequently exhibiting a near-linear trend. The sublinear rate in theory arises from the convexity, but not strong convexity, of the objective function, which is determined by the positive semidefiniteness of the matrix $H$. Nonetheless, having an explicit, non-asymptotic convergence rate bound is a core theoretical contribution of this work. It provides a quantifiable guarantee on the precision achievable within a finite number of iterations, which is vital for deploying the algorithm in high-reliability applications.
\end{remark}

\section{Simulations}\label{sim}
To validate the effectiveness, convergence, and stability of the proposed MFIOC algorithm, this section presents a series of simulation experiments. We first demonstrate the core performance of the algorithm through a typical single-trajectory case study and compare it with an existing differentiable programming algorithm. Subsequently, we conduct Monte Carlo experiments to systematically evaluate the robustness and superiority of both algorithms under different system parameters.
\subsection{Algorithm Convergence Analysis}

To quantitatively evaluate the effectiveness and convergence properties of the proposed algorithm, this section presents a set of numerical simulations. We postulate a nominal LQR system as the ground-truth model to generate the optimal expert trajectory. The system matrices and cost functional weighting matrices are defined as follows:
\begin{equation}
    \nonumber
    \begin{aligned}
&A=\left[\begin{array}{ccc}
-0.650 & -0.109 & -0.066 \\
-0.109 & -0.995 & -0.093 \\
-0.066 & -0.093 & -0.733
\end{array}\right] ,
&B=\left[\begin{array}{cc}
0.650 & 0.343 \\
-0.215 & -0.319 \\
0.778 & -0.101
\end{array}\right], \\
&Q=\left[\begin{array}{ccc}
1.393 & 0.120 & 0.146 \\
0.120 & 3.559 & -1.960 \\
0.146 & -1.960 & 1.702
\end{array}\right] ,
&R=\left[\begin{array}{cc}
3.761 & -0.324 \\
-0.324 & 3.719
\end{array}\right].
\end{aligned}
\end{equation}
Using the parameters above and a given initial state $x(0)=\left[\begin{array}{lll} -0.746 & 1.231 & 0.548 \end{array}\right]^T$, we generate an expert trajectory $(x^*(\cdot), u^*(\cdot))$ over a time horizon of $T=8s$ with a sampling interval of $\Delta t=0.1s$. The core objective of this experiment is to evaluate whether our proposed algorithm (MFIOC) can learn an equivalent LQR model from the observed trajectory data and accurately reproduce the expert's behavior. The core performance metric is the Mean Squared Error (MSE) between the reconstructed trajectory and the expert trajectory.

First, based on the generated expert trajectory, we identify the corresponding ground-truth optimal state-feedback gain $K^*$ via least-squares:
\begin{equation}
    K^*=\left[\begin{array}{ccc}
    0.161 & -0.316 & 0.285 \\
    0.098 & -0.135 & 0.083
    \end{array}\right].\nonumber
\end{equation}
The convergence process of the core dual variable $\lambda$ in our proposed MFIOC algorithm is shown in Fig.~\ref{fig:my_ioc_convergence}.
\begin{figure}[!htbp]
    \centering
    \includegraphics[width=0.7\linewidth]{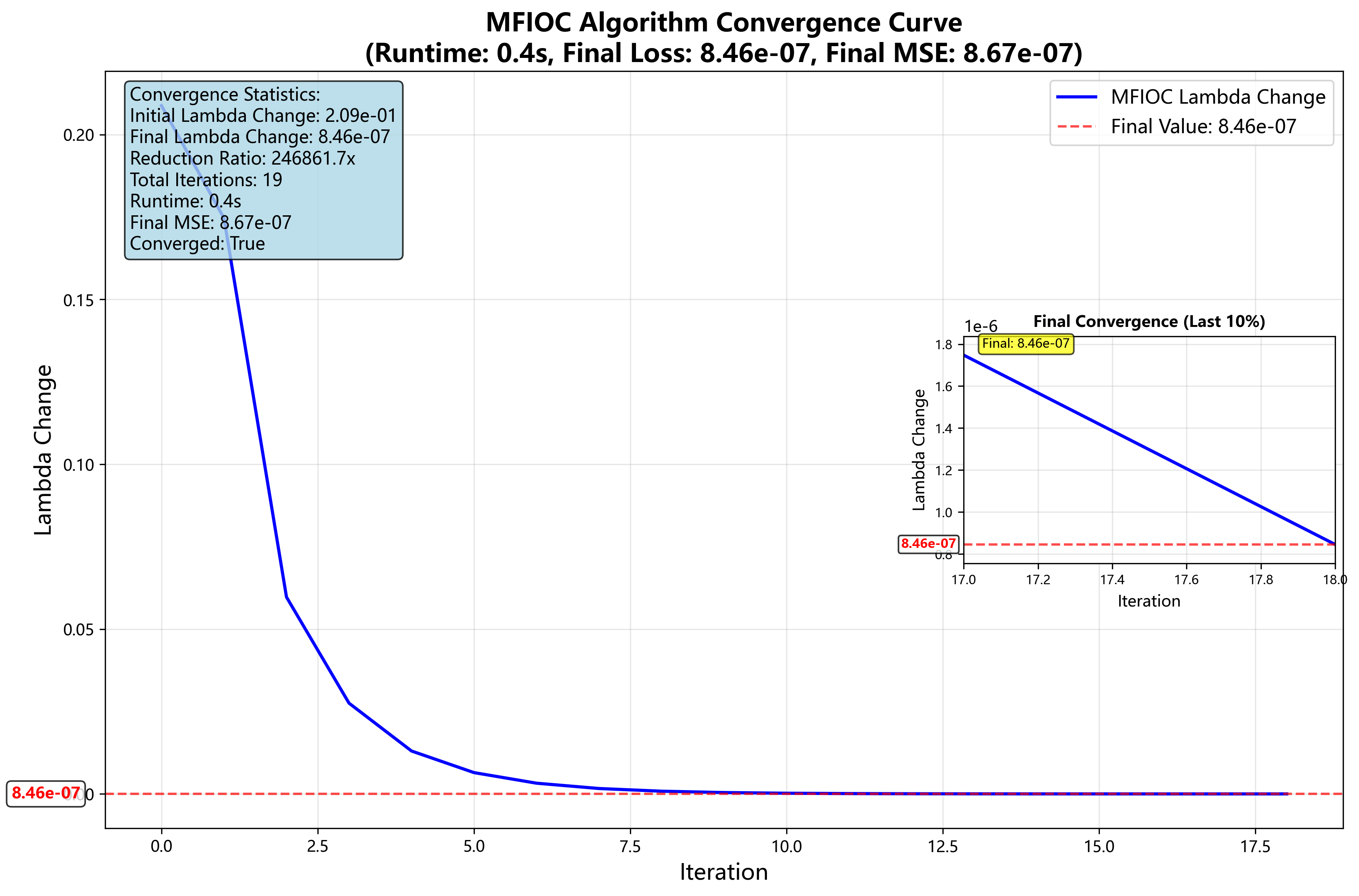}
    \caption{Convergence curve of the dual variable in our proposed algorithm.}
    \label{fig:my_ioc_convergence}
\end{figure}
As shown in Fig.~\ref{fig:my_ioc_convergence}, the algorithm exhibits extremely fast convergence, reaching a high-precision solution in only 19 iterations (0.3 seconds). This high computational efficiency stems from our core contribution of reformulating the original non-convex joint estimation problem into an equivalent convex second-order cone program. This transformation enables the design of an efficient solver based on the BSUM framework, where each subproblem has a closed-form solution, thereby avoiding the sensitivity to hyperparameters (e.g., step size) and costly line searches inherent in traditional gradient descent methods.

The optimal control gain recovered by the MFIOC algorithm is:
\begin{equation}
    \hat{K}_{\text{MFIOC}}=\left[\begin{array}{ccc}
     0.161 & -0.316 & 0.285 \\
    0.098 & -0.135 & 0.082
    \end{array}\right].\nonumber
\end{equation}
The Frobenius norm of the error, $||\hat{K}_{\text{MFIOC}} - K^*||_F$, is as low as $1.424 \times 10^{-4}$. The high accuracy in policy recovery validates the correctness of our theoretical framework. Since the algorithm is guaranteed to converge to an optimal solution within the feasible set, it can precisely identify an equivalent LQR model that perfectly explains the expert's behavior. This is corroborated by the trajectory reproduction shown in Fig.~\ref{fig:my_ioc_trajectory}, where the final Mean Squared Error (MSE) is only $8.67 \times 10^{-7}$.
\begin{figure}[!ht]
    \centering
    \includegraphics[width=0.7\linewidth]{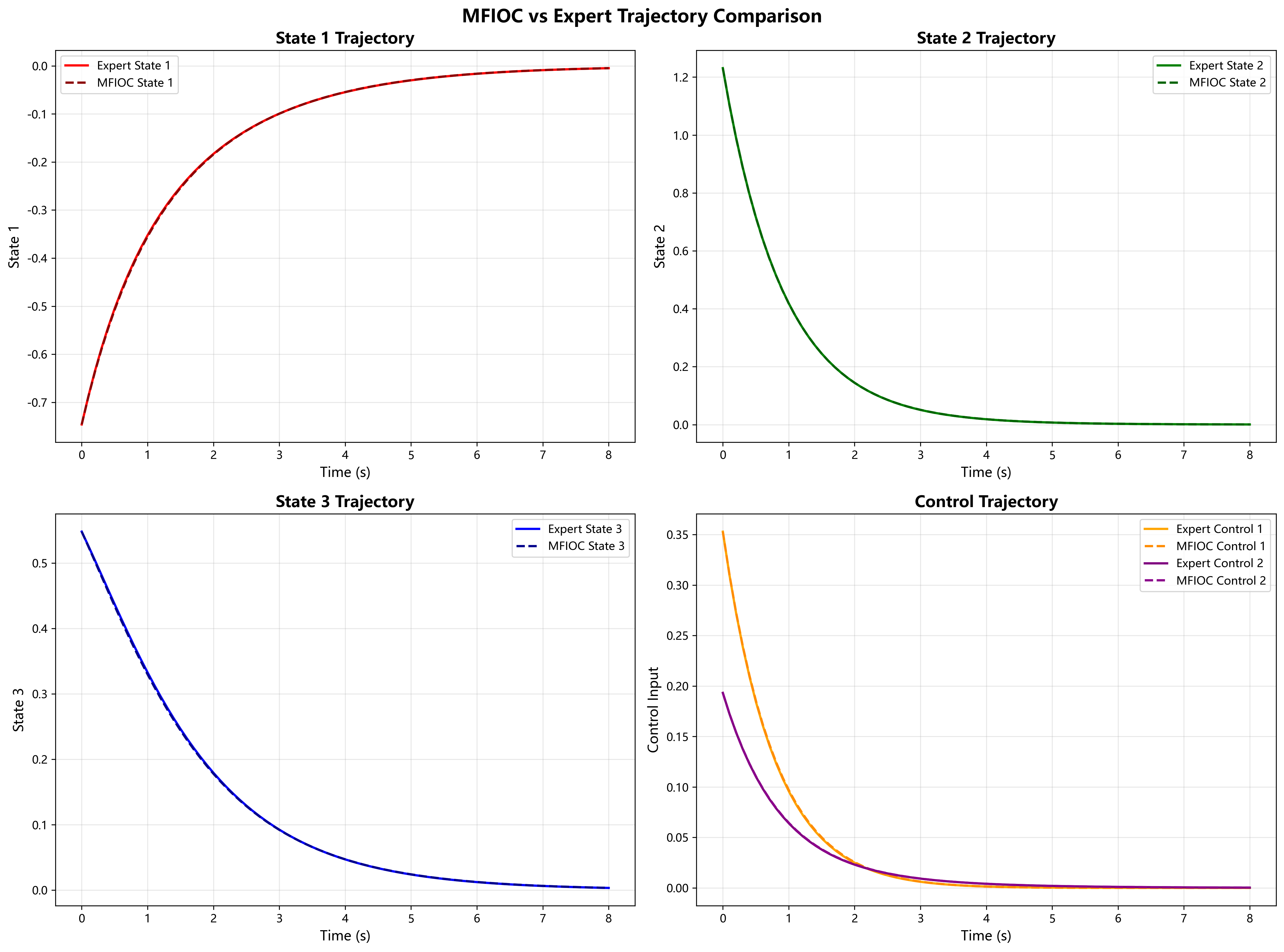}
    \caption{Comparison of the reconstructed trajectory (MFIOC) and the expert trajectory.}
    \label{fig:my_ioc_trajectory}
\end{figure}

To further highlight the characteristics of our method, we select the state-of-the-art PDP algorithm as a benchmark. As a general-purpose, end-to-end learning and control framework based on differentiable optimization, PDP's strength lies in its ability to handle complex nonlinear systems and non-quadratic cost functions without requiring special structural transformations, thus offering strong generality. 

However, this generality also presents challenges in the context of the inverse LQR problem studied herein. From a theoretical standpoint, PDP's slower convergence can be attributed to two main factors: (1) Non-Convex Optimization Landscape. The joint estimation of system and cost parameters constitutes a non-convex problem. Any first-order optimization algorithm based on local gradient information naturally faces challenges such as slow convergence and the risk of getting trapped in local minima. (2) High Cost per Iteration. PDP's gradient computation follows a "forward-backward" pass. Specifically, each iteration requires solving a full optimal control problem (the forward pass) and an auxiliary LQR problem (the backward pass). Both steps involve integration or recursion over the entire trajectory, and although the authors show that the complexity of the backward pass is only linear in the time horizon—a significant advantage over some alternatives—the per-iteration cost remains substantial compared to our MFIOC method. Our algorithm, through its convex reformulation, concentrates the main computational load into a one-time matrix construction phase (building $\Omega$ and $H$). Subsequent iterations are extremely lightweight, as they only involve closed-form updates of the dual variables, thus fundamentally improving computational efficiency.

The experimental results clearly corroborate this fundamental difference. As shown in Fig.~\ref{fig:pdp_convergence}, the high per-iteration cost and the exploration within a non-convex landscape led the PDP algorithm to take 2000 iterations and 98.4 seconds to converge. The final control gain it learned is
\begin{equation}
    \hat{K}_{\text{PDP}}=\left[\begin{array}{ccc}
     0.205 & -0.330 & 0.333 \\
    0.123 & -0.131 & 0.086
    \end{array}\right],\nonumber
\end{equation}
which exhibits a relative error of approximately $7.1 \times 10^{-2}$ and leads to perceptible deviations in trajectory reproduction (Fig.~\ref{fig:pdp_trajectory}).
\begin{figure}[!htbp]
    \centering
    \includegraphics[width=0.7\linewidth]{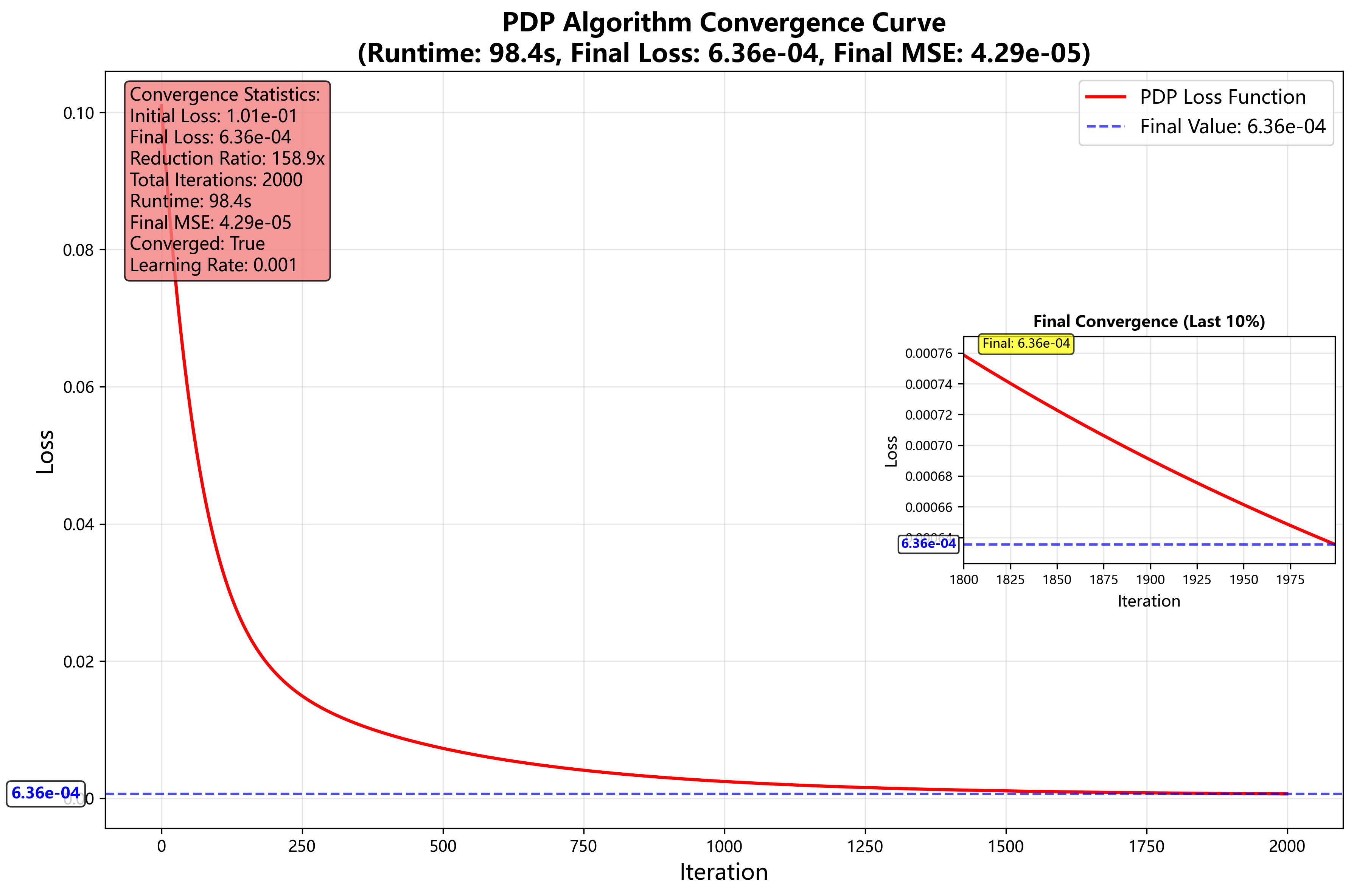}
    \caption{Convergence curve of the loss function for the PDP algorithm.}
    \label{fig:pdp_convergence}
\end{figure}
\begin{figure}[!htbp]
    \centering
    \includegraphics[width=0.7\linewidth]{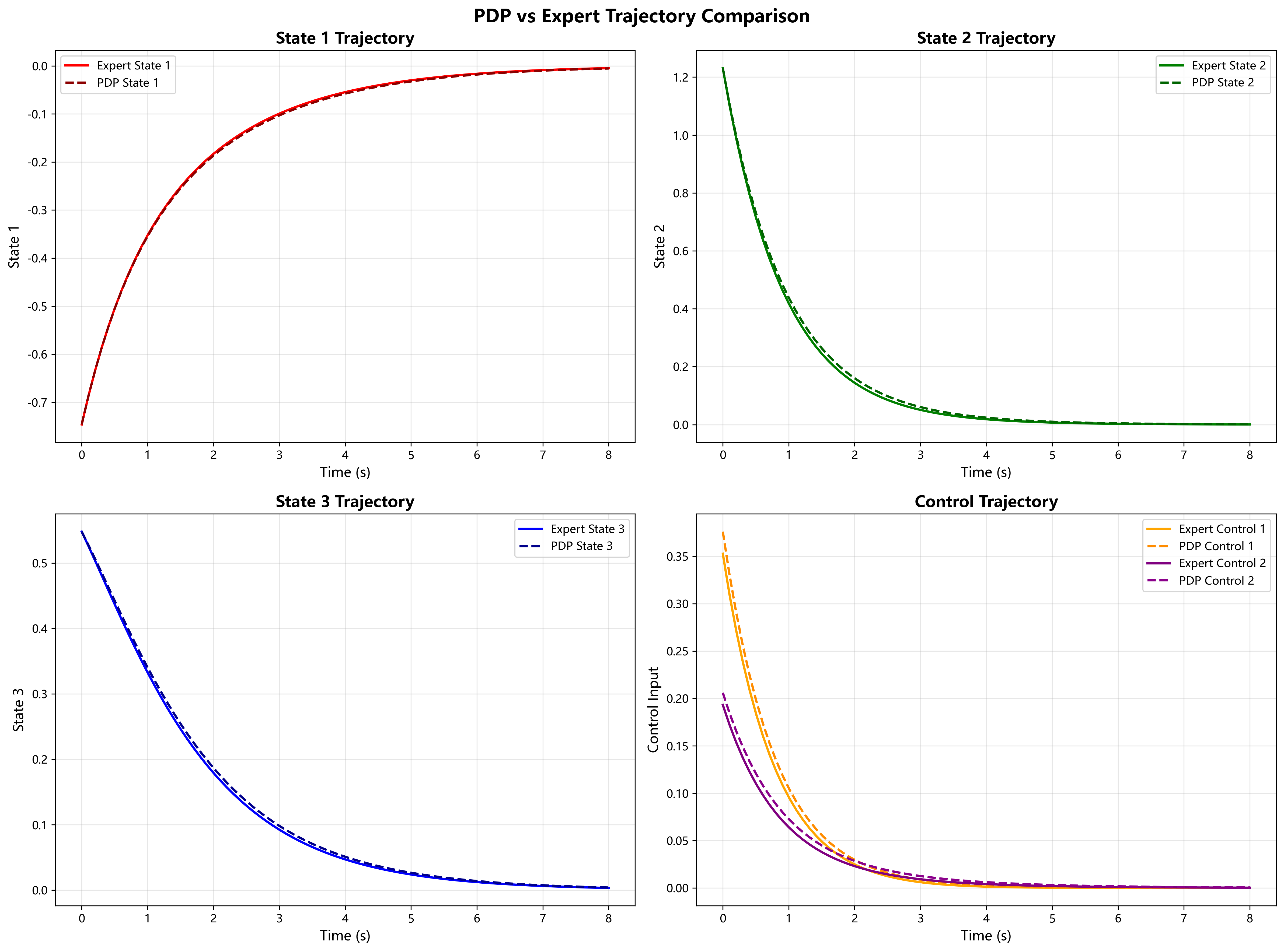}
    \caption{Comparison of the reconstructed trajectory (PDP) and the expert trajectory.}
    \label{fig:pdp_trajectory}
\end{figure}
The performance of the two algorithms is summarized in Table~\ref{tab:comparison}.
\begin{table}[h]
\centering
\caption{Performance Comparison between MFIOC and PDP}
\label{tab:comparison}
\begin{tabular}{lcc}
\hline
\textbf{Performance Metric} & \textbf{MFIOC (Proposed)} & \textbf{PDP (Benchmark)} \\ \hline
Runtime (s) & \textbf{0.3} & 98.4 \\
Iterations & \textbf{19} & 2000 \\
$K$ Relative Error & $\mathbf{1.424 \times 10^{-4}}$ & $7.1 \times 10^{-2}$ \\
Trajectory MSE & $\mathbf{8.67 \times 10^{-7}}$ & $4.29 \times 10^{-5}$ \\ \hline
\end{tabular}
\end{table}

This comparison clearly illustrates the trade-off between a \textit{customized solution path} and a \textit{general-purpose optimization framework}. Our MFIOC algorithm achieves superior computational efficiency and optimality guarantees by exploiting the inherent structure of the LQR problem—a "specialization" approach. In contrast, the PDP algorithm represents a "generalization" approach, maintaining applicability to a broader class of problems at the cost of weaker performance guarantees and lower efficiency when applied to this specific problem class.

\subsection{Algorithm Stability Analysis}
To comprehensively assess the stability and generalizability of the algorithm, we conducted 100 independent Monte Carlo experiments with $n=3$ and $m=2$. In each trial, a new set of stabilizable system parameters and a new initial state were randomly generated to produce an expert trajectory. The two algorithms are then tasked with tracking these 100 trajectories. The statistical comparison of their performance is presented in Fig.~\ref{fig:monte_carlo}.

\begin{figure}[!hbtp]
    \centering
    \includegraphics[width=0.7\linewidth]{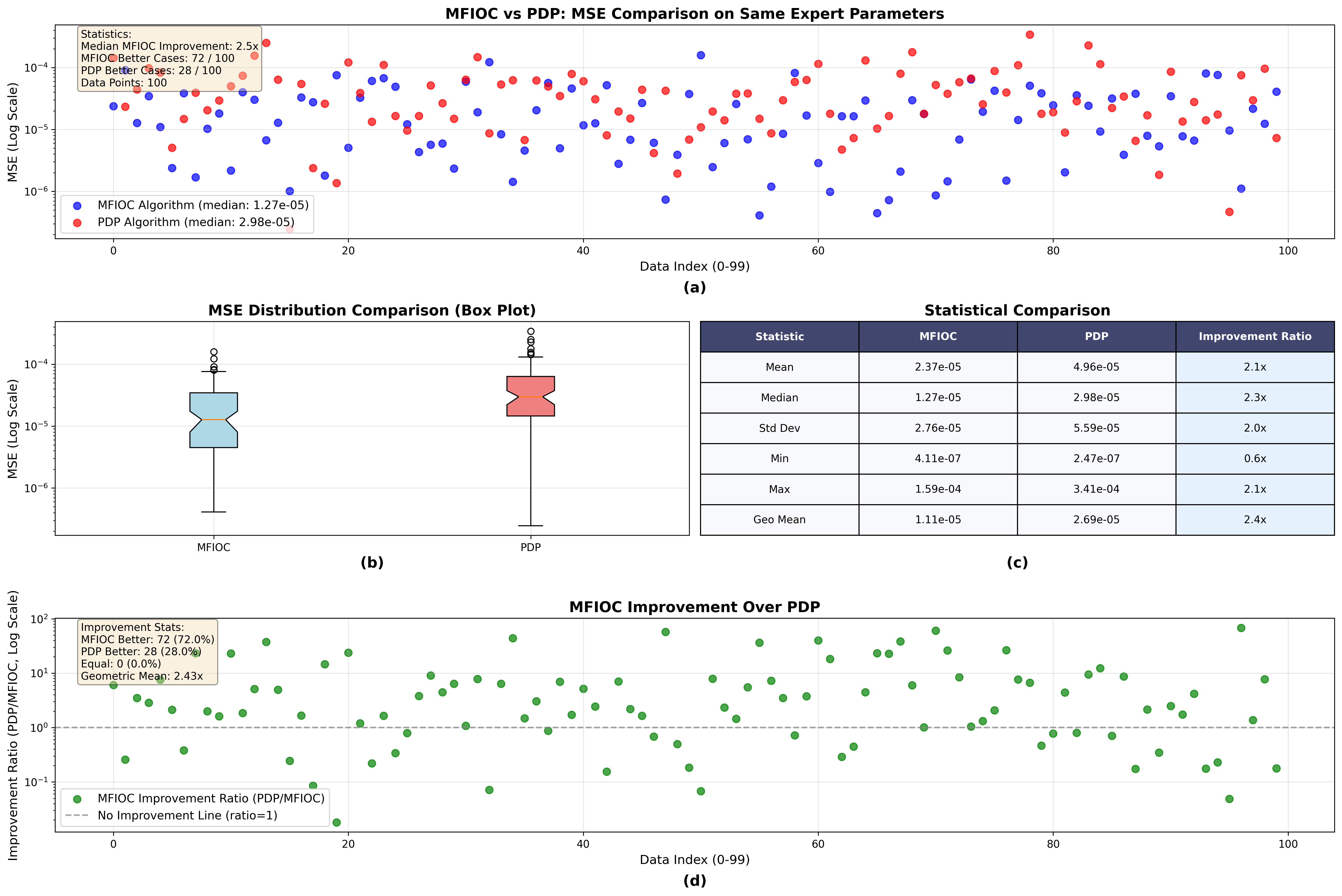}
    \caption{Performance comparison over 100 Monte Carlo experiments.}
    \label{fig:monte_carlo}
\end{figure}

Figure~\ref{fig:monte_carlo} provides an in-depth comparison of the MSE from multiple perspectives. Fig.~\ref{fig:monte_carlo}(a) clearly shows that in the vast majority of experiments, the MSE values for our algorithm (blue dots) are lower than those for the PDP algorithm (red dots), indicating a general accuracy advantage. The box plot in Fig.~\ref{fig:monte_carlo}(b) visually confirms that the MSE distribution for our algorithm is more concentrated and shifted downwards. The statistical table in Fig.~\ref{fig:monte_carlo}(c) provides quantitative support: our algorithm's median MSE ($1.27 \times 10^{-5}$) is about half that of the PDP algorithm ($2.98 \times 10^{-5}$), representing a 2.3-fold performance improvement. Our method also shows significant superiority in terms of mean, maximum error, and standard deviation, proving that it is not only more accurate but also more stable. Finally, Fig.~\ref{fig:monte_carlo}(d) displays the MSE improvement ratio of our algorithm relative to PDP in each experiment. The statistics show that our algorithm outperformed PDP in 72\% of the trials. These data strongly corroborate the robustness and superiority of our proposed algorithm.

In summary, the simulation results fully validate the effectiveness of our proposed algorithm. Compared to the state-of-the-art PDP algorithm, our method achieves an order-of-magnitude improvement in computational efficiency while also demonstrating significant advantages in reconstruction accuracy and stability. This indicates that our proposed theoretical framework, which transforms the inverse LQR problem into a convex conic program, is both efficient and reliable in practice.

\section{Conclusion}\label{conclusion}
This paper addressed the inverse LQR problem for systems with unknown dynamics and cost functionals by proposing a solution framework with rigorous convergence and convergence rate guarantees. To overcome the lack of theoretical assurances in existing methods, we first equivalently reformulated this non-convex inverse problem into a convex second-order cone program. Subsequently, by solving its Lagrangian dual using the BSUM algorithm, we designed an efficient solution procedure. Concurrently, we strictly proved that the proposed algorithm has an $\mathcal{O}(1/k)$ convergence rate, providing what is, to the best of our knowledge, the first solution for the model-free IOC problem with an explicit convergence rate guarantee. Simulation results validated the effectiveness of this framework: compared to an advanced differentiable programming algorithm, our method achieved a considerable improvement in computational efficiency and demonstrated significant advantages in reconstruction accuracy and robustness.

Future work can be extended in several directions. First, the current theoretical framework could be extended to discrete-time systems and output-feedback systems to accommodate a broader range of applications. Second, the robustness and performance bounds of the algorithm could be investigated when expert trajectories contain noise or are sub-optimal. Finally, exploring the applicability of this method in partially observable scenarios presents a valuable research avenue.

% \begin{figure}[htbp]
% 	\centering
% 	% 第一行
% 	\subfigure[The expectation of trajectory and control.]{
% 		\includegraphics[width=0.45\textwidth]{mean_randn_1.eps}
% 	}
% 	\subfigure[The trajectory and control with rng(75).]{
% 		\includegraphics[width=0.45\textwidth]{randn_75_twister_1.eps}
% 	}
% 	% 第二行
% 	\subfigure[The trajectory and control with rng(100).]{
% 		\includegraphics[width=0.45\textwidth]{randn_100_twister_1.eps}
% 	}
% 	\subfigure[The trajectory and control with rng(125).]{
% 		\includegraphics[width=0.45\textwidth]{randn_125_twister_1.eps}
% 	}
% 	\caption{The trajectory and control of Example~\ref{example}.}
% 	\label{mpcturnpike}
% \end{figure}
\bibliographystyle{unsrt}
\bibliographystyle{plain}        % Include this if you use bibtex
\bibliography{reference}           % and a bib file to produce the
                                 % bibliography (preferred). The
                                 % correct style is generated by
                                 % Elsevier at the time of printing.

%\bibliographystyle{model5-names}

%\begin{thebibliography}{99}     % Otherwise use the
                                 % thebibliography environment.
                                 % Insert the full references here.
                                 % See a recent issue of Automatica
                                 % for the style.
%  \bibitem[Heritage, 1992]{Heritage:92}
%     (1992) {\it The American Heritage.
%     Dictionary of the American Language.}
%     Houghton Mifflin Company.
%  \bibitem[Able, 1956]{Abl:56}
%     B.~C.~Able (1956). Nucleic acid content of macroscope.
%     {\it Nature 2}, 7--9.
%  \bibitem[Able {\em et al.}, 1954]{AbTaRu:54}
%     B.~C. Able, R.~A. Tagg, and M.~Rush (1954).
%     Enzyme-catalyzed cellular transanimations.
%     In A.~F.~Round, editor,
%     {\it Advances in Enzymology Vol. 2} (125--247).
%     New York, Academic Press.
%  \bibitem[R.~Keohane, 1958]{Keo:58}
%     R.~Keohane (1958).
%     {\it Power and Interdependence:
%     World Politics in Transition.}
%     Boston, Little, Brown \& Co.
%  \bibitem[Powers, 1985]{Pow:85}
%     T.~Powers (1985).
%     Is there a way out?
%     {\it Harpers, June 1985}, 35--47.

%\end{thebibliography}

% \bibliography{refs}
% \bibliographystyle{plain}

\end{document}